\newtheorem{teo}{Theorem}[section]
\newtheorem{prop}[teo]{Proposition}
\newtheorem{cor}[teo]{Corollary}
\newtheorem{pro}[teo]{Problem}
\newtheorem{algo}[teo]{Algorithm}
\newtheorem{rem}[teo]{Remark}
\newtheorem{ejem}[teo]{Example}
\newcommand{\N}{\mathbb N}
\newcommand{\R}{\mathbb R}
\renewcommand{\H}{\mathcal{H}}
\newcommand{\G}{\mathcal G}
\newcommand{\sH}{\mathsf H}
\newcommand{\sG}{\mathsf G}
\newcommand{\sx}{\mathsf x}
\newcommand{\sg}{\mathsf g}
\newcommand{\sw}{\mathsf w}
\newcommand{\sy}{\mathsf y}
\renewcommand{\sp}{\mathsf p}
\newcommand{\su}{\mathsf u}
\newcommand{\sT}{\mathsf T}
\newcommand{\sL}{\mathsf L}
\newcommand{\sV}{\mathsf V}
\newcommand{\HH}{{\bm{\mathcal{H}}}}
\newcommand{\id}{\textnormal{Id}}
\newcommand{\x}{\bm x}
\newcommand{\y}{\bm y}
\newcommand{\weak}{\rightharpoonup}
\newcommand{\ran}{\textnormal{ran}\,}
\newcommand{\dom}{\textnormal{dom}\,}
\newcommand{\infconv}{\ensuremath{\mbox{\small$\,\square\,$}}}
\newcommand{\zer}{\textnormal{zer}}
\newcommand{\gra}{\textnormal{gra}\,}
\newcommand{\argm}[1]{\underset{#1}{\argmin\, }}
\newcommand{\scal}[2]{{\left\langle{{#1}\mid{#2}}\right\rangle}}
\newcommand{\menge}[2]{\big\{{#1}~\big |~{#2}\big\}}
\newcommand{\pinf}{\ensuremath{{+\infty}}}
\newcommand{\RPP}{\ensuremath{\left]0,+\infty\right[}}
\newcommand{\RX}{\ensuremath{\left]-\infty,+\infty\right]}}
\newcommand{\sri}{\ensuremath{\text{\rm sri}\,}}
\newcommand{\prox}{\ensuremath{\text{\rm prox}\,}}
\newcommand{\weakly}{\ensuremath{\:\rightharpoonup\:}}
\numberwithin{equation}{section}
\DeclareSymbolFont{fouriersymbols}{FMS}{futm}{m}{n}
\DeclareSymbolFont{fourierlargesymbols}{FMX}{futm}{m}{n}
\DeclareMathDelimiter{\nr}{\mathord}{fouriersymbols}{152}{fourierlargesymbols}{147}
\DeclareMathOperator*{\argmin}{arg\,min}
\DeclareMathDelimiter{\nr}{\mathord}{fouriersymbols}{152}{fourierlargesymbols}{147}
\DeclareMathAlphabet{\mathpzc}{OT1}{pzc}{m}{it}
\title[FRB and SDR with 
partial inverses]{Forward-Reflected-Backward and 
	Shadow-Douglas--Rachford with partial inverse for Solving 
	Monotone 
	Inclusions}
\author{Fernando Rold\'an$^{\dagger}$}
\address{$^{\dagger}$ Departamento de Ingeniería Matemática, Universidad de Concepción, Concepción, Chile. 
	{\it 
		E-mail address:} 
	{\sf{fernandoroldan@udec.cl, 
			fdoroldan94@gmail.com}}. }
\begin{document}
	
	\begin{abstract}
		In this article, we study two methods for
			solving
			monotone inclusions in real Hilbert spaces involving the 
			sum of a 
			maximally monotone operator, a monotone-Lipschitzian 
			operator, a cocoercive 
			operator, and a normal cone to a vector subspace. Our 
			algorithms split and exploits the intrinsic properties of 
			each
			operator involved in the inclusion. We derive our methods by 
			combining partial inverse techniques 
			with the {\it forward-reflected-backward} algorithm  
			and with the {\it shadow-Douglas--Rachford} algorithm, 
			respectively. Our 
			methods inherit 
			the advantages of those methods, requiring only one 
			activation 
			of the Lipschitzian operator, one activation of the 
			cocoercive operator, 
			two projections onto the closed vector subspace, and one 
			calculation of the resolvent of the maximally monotone 
			operator. Additionally, to allow larger step-sizes in one of the proposed methods, we revisit FSDR by extending its convergence for larger step-sizes. Furthermore, we provide methods for solving monotone inclusions involving a sum of maximally monotone operators and for 
				solving a system of primal-dual inclusions
			involving a mixture	of sums, linear compositions, 
			parallel 
			sums, Lipschitzian operators, cocoercive operators, and 
			normal cones. We apply our methods to constrained composite 
			convex optimization problems as a specific example. Finally, 
			in order to compare our methods with existing methods 
			in the literature, 
			we provide numerical experiments on constrained total 
			variation least-squares optimization problems and computed tomography inverse problems. We obtain 
			promising numerical results.
		\par
		\bigskip
		\noindent \textbf{Keywords.} {\it Splitting algorithms, 
			monotone operator theory, partial inverse, convex 
			optimization, forward-reflected-backward, 
			shadow-Douglas--Rachford}
		\par
		\bigskip \noindent
		2020 {\it Mathematics Subject Classification.} {47H05, 47J25, 
			49M29, 65K05, 90C25.}
	\end{abstract}
	
	\maketitle
	
\section{Introduction}
This article is devoted to the numerical resolution of the 
following 
problem.
\begin{pro}\label{prob:main}
	Let $A: \H \to 2^\H$ be a maximally monotone operator, let
	$B : \H \to \H$ be a $\beta$-Lipschitzian operator for some 
	$\beta >0$, let $C: \H \to \H$ be a $\zeta^{-1}$-cocoercive 
	operator 
	for some 
	$\zeta >0$ and let $V \subset \H$ be a closed vector 
	subspace. 
	The 
	problem is to 
	\begin{equation}\label{eq:problemmain}
		\text{find} \quad x \in \H \quad \text{such that} \quad 0 
		\in 
		Ax+Bx+Cx+N_Vx,
	\end{equation}{}
	where $N_V$ denotes the normal cone to $V$ and under the 
	assumption that its solution set, denoted by  $Z$, is 
	not 
	empty.
\end{pro}
This problem is deep related with mechanical problems 
\cite{Gabay1983,Glowinski1975,Goldstein1964}, differential 
inclusions \cite{AubinHelene2009,Showalter1997}, game theory 
\cite{AttouchCabot19,Nash13}, image processing  
\cite{chambolle1997,Pustelnik2019,daubechies2004}, traffic theory 
\cite{Nets1,Bui2022traffic,Fukushima1996The,GafniBert84}, among others 
engineering 
problems. 

Problem~\ref{prob:main} can be solved by the {\it forward-partial 
	inverse-half-forward} splitting (FPIHF) proposed in 
\cite{BricenoRoldanYuchaoChen}. Additionally, in particular 
instances such as $V=\H$, or when some of the operator vanish, 
methods mentioned in \cite{BricenoRoldanYuchaoChen} can also 
solve Problem~\ref{prob:main}. FPIHF requires, at each iteration, 
two activation of $B$, 
one activation of $C$, three 
projection 
onto the subspace $V$, and one backward step by calculating the 
resolvent of $A$. This algorithm is derived by 
combining partial 
inverses 
techniques \cite{Spingran1983AMO,Spingarn1985MP} and the 
{\it forward-backward-half-forward} (FBHF) splitting 
proposed in \cite{BricenoDavis2018}. In turn, FBHF can solve 
Problem~\ref{prob:main} when $V=\H$ and generalize the 
{\it forward-backward} (FB) 
\cite{Goldstein1964,Lions1979SIAM,passty1979JMAA} and the 
{\it forward-backward-forward} or {\it Tseng's 
	splitting} (FBF) \cite{Tseng2000SIAM}. When $V=\H$, FB and 
FBF can 
solve 
Problem~\ref{prob:main} when $B=0$ and $C=0$, respectively. FBHF
iterates as follows
\begin{equation}
	\label{eq:algoFBHF}
	(\forall n \in \N)\quad \left\lfloor
	\begin{aligned}
		&z_{n}=J_{{\gamma}A} (x_n-\gamma (B+C)x_n)\\
		&x_{n+1}=x_n-\gamma (Bz_n-Bx_n),
	\end{aligned}
	\right.
\end{equation}
where $x_0 \in \H$ and its weak convergence to a zero of $A+B+C$ 
is 
guaranteed for $\gamma 
\in ]0,4/(\zeta+\sqrt{\zeta^2+16\beta^2})[$. In the case when 
$B=0$ the recurrence on \eqref{eq:algoFBHF} reduces to 
FB, whose convergence is guaranteed for  $\gamma \in 
]0,2/\zeta[$.  In the case when 
$C=0$ the recurrence on \eqref{eq:algoFBHF} reduces to 
FBF, whose convergence is guaranteed for  $\gamma \in 
]0,1/\beta[$.
From \eqref{eq:algoFBHF} can notice that FBHF require one 
activation of $C$ and two 
activations of $B$, hence, FPIHF inherits this 
property. Methods generalizing FB and FBF which handle the case 
where $V\neq\H$ are proposed in \cite{briceno2015Optim} and 
\cite{Briceno2015JOTA}, respectively. Similarly, by using partial 
inverses 
techniques and the Primal-Dual algorithm \cite{Condat13,Vu13}, 
the authors in \cite{BricenoDeride2023} proposed a method which 
can solve Problem~\ref{prob:main} involving an additional 
backward step
calculating the resolvent of the Lipschitzian operator.

Recently, several methods arise for 
finding a 
zero of $A+B$ which, by 
storing the 
previous step, require only one activation of $B$ per iteration
\cite{Cevher2020SVVA,Csetnek-2019-AMO,Malitsky2020SIAMJO}. In 
particular, the authors in \cite{Malitsky2020SIAMJO} propose the 
{\it forward-reflected-backward} splitting (FRB) and an
extension of it, called 
{\it forward-half-reflected-backward} (FHRB), which can 
solve Problem~\ref{prob:main} when $V=\H$. FHRB iterates
\begin{equation}
	\label{eq:MTCOR}
	(\forall n \in \N)\quad \left\lfloor
	\begin{aligned}
		&x_{n+1}=J_{{{\lambda}}A} (x_n-{\lambda}
		(2Bx_n-Bx_{n-1}+Cx_{n})),\\
	\end{aligned}
	\right.
\end{equation}
where, $x_0 \in \H$ and its weak convergence to a zero of 
$A+B+C$ is guaranteed for ${\lambda}
\in ]0,2/(4\beta+\zeta)[$. The recurrence in \eqref{eq:MTCOR} reduces 
to FRB when, $C=0$ and it reduces to FB when $B=0$. 
Inertial and momentum variants of FHRB are proposed in 
\cite{MorinBanertGiselsson2022,Tang2022FHRBin}.
On the other hand, the authors in 
\cite{Csetnek-2019-AMO}, by a discretization of a dynamical 
system associated to the Douglas--Rachford operator 
\cite{Eckstein1992,Lions1979SIAM,Svaiter2011}, derive the {\it 
	shadow-Douglas--Rachford}algorithm (SDR) for finding a zero 
of $A+B$. Later, the authors in \cite{Jingjing20213OPSDR}, include 
inertia to SDR  and also extend it finding a zero of 
$A+B+C$, we call this extension {\it 
	backward-shadow-Douglas--Rachford} (FSDR). FSDR iterates as 
\begin{equation}
	\label{eq:SDRCOR}
	(\forall n \in \N)\quad \left\lfloor
	\begin{aligned}
		&x_{n+1}=J_{{\lambda}A} (x_n-{\lambda}(B+C)x_n)-{\lambda}
		(Bx_n-Bx_{n-1}),
	\end{aligned}
	\right.
\end{equation}
where, $x_0 \in \H$ and its weak convergence to a zero of $A+B+C$ 
is guaranteed for ${\lambda}\in ]0,2/(3\zeta+9\beta)[$. In the case 
when 
$C=0$, the recurrence in \eqref{eq:SDRCOR} reduces to SDR. Note 
that,
if $\zeta \to 0$, then $2/(3\zeta+9\beta)\to 2/(9\beta)$. 
However, 
the authors in \cite{Csetnek-2019-AMO} guarantee the convergence 
of 
SDR for $\lambda\in 
]0,1/(3\beta)[$, therefore, the result in 
\cite{Jingjing20213OPSDR} 
no longer recover the step-sizes of SDR.
In this paper,  by proceeding similar to \cite{BricenoRoldanYuchaoChen}, that is, combining partial inverse 
techniques with FHRB and FSDR, we propose splitting 
algorithms 
that 
fully exploits the structure and properties of each operator 
involved in Problem~\ref{prob:main}. Our methods require only 
one 
activation
of the Lipschitzian operator, one activation of the 
cocoercive operator, two projections onto the closed vector
subspace, and one computation of the resolvent of the maximally 
monotone operator. In the particular case when 
$V=\H$ we recover FHRB and FSDR. Additionally, we provide a 
result of convergence allowing larger step-sizes in FSDR, being a result of interest in itself.
By applying our algorithms in a product space, we derive two
algorithms solving a system of primal-dual inclusions involving a mixture 
of sums, 
linear 
compositions, parallel sum, Lipschitzian operators, cocoercive 
operators, 
and 
normal cones. Additionally, we propose two methods for solving a monotone inclusion that involves the sum of maximally monotone operators avoiding the incorporation of dual variables, which can be problematic in large dimension problems. These methods are derived by applying the proposed algorithms in an adequate subspace $V$.
As a particular instance, 
we tackle examples in composite 
convex optimization problems under vector subspace constraints.
In order to compare the proposed methods with each other and with 
existing methods in the literature, we test our algorithms in a 
constraint linear composite TV-regularized least-squares problem and in image reconstruction problems in computed tomography. These numerical experiments exhibit the numerical advantages of the proposed methods over the method proposed in \cite{BricenoRoldanYuchaoChen}, FHRB, and Condat--V\~u \cite{Condat13,Vu13} thus highlighting the practical importance of our proposed methods.
The article is organized as follows. In 
Section~\ref{sec:notation} we 
present our notation and classical results in monotone 
operator 
theory and convex analysis. In Section~\ref{sec:mr} we derive the 
converge of our algorithms for solving Problem~\ref{prob:main}. 
In 
Section~\ref{sec:app} we apply our methods for solving 
composite 
primal-dual inclusions, convex optimization problems, and a inclusion involving a sum of maximally monotone operators. Finally, 
Section~\ref{sec:numerical} is 
devoted to numerical experiments in total 
variation least-squares optimization problems and compute tomography image reconstruction.

\section{Notations and Preliminaries} \label{sec:notation}
Throughout this paper, $\H$ and $\G$ are real Hilbert spaces. We 
denote their scalar 
products by $\scal{\cdot}{\cdot}$, the associated norms by 
$\|\cdot \|$, and the weak and strong convergence by $\weakly$ 
and $\to$, respectively.
Given a linear bounded 
operator $L:\H \to \G$, we denote its adjoint by 
$L^*\colon\G\to\H$. 
$\id$ denotes the identity operator on $\H$. 
Let $D\subset \H$ be non-empty, let $T: D \rightarrow \H$, and 
let $\beta \in \left]0,+\infty\right[$. The operator $T$ is 
$\beta-$cocoercive if 
\begin{equation} \label{def:coco}
	(\forall x \in D) (\forall y \in D)\quad \langle x-y \mid 
	Tx-Ty 
	\rangle 
	\geq \beta \|Tx - Ty 
	\|^2
\end{equation}
and it is $\beta-$Lipschitzian if 
\begin{equation} \label{def:lips}
	(\forall x \in D) (\forall y \in D)\quad \|Tx-Ty\| \leq  
	\beta\|x 
	- y 
	\|.
\end{equation}	
Let $A:\H \rightarrow 2^{\H}$ be a set-valued operator. The 
domain, 
range, and graph of $A$ are 
$\dom\, A = \menge{x \in \H}{Ax \neq  \varnothing}$, 
$\ran\, A = \menge{u \in \H}{(\exists x \in \H)\, u \in Ax}$, 
and $\gra 
A = \menge{(x,u) \in \H \times \H}{u \in Ax}$,
respectively. 
The set of zeros of $A$ is  $\zer A = 
\menge{x \in \H}{0 \in Ax}$, the inverse of $A$ is $A^{-1}\colon 
\H  
\to 2^\H \colon u 
\mapsto 
\menge{x \in \H}{u \in Ax}$, and the resolvent of $A$ is 
given by $J_A=(\id+A)^{-1}$. Let $B:\H\to 2^\H$ be a set valued 
operator, 
the 
parallel sum 
of $A$ and $B$ is given by $A\infconv B:=(A^{-1}+B^{-1})^{-1}$.
The operator $A$  is monotone if 
\begin{equation}\label{def:monotone}
	(\forall (x,u) \in \gra A) (\forall (y,v) \in \gra A)\quad 
	\scal{x-y}{u-v} 
	\geq 0.
\end{equation}
Additionally, $A$ is maximally monotone if it is monotone and 
there exists 
no 
monotone operator $B :\H\to  2^{\H}$ such that $\gra B$ properly 
contains $\gra A$. Given a non-empty closed convex set 
$V\subset\H$, we denote by 
$P_V$ the projection onto $V$. In the case when $V$ 
is a 
closed vector subspace of $\H$ we have that $P_V$ is a linear 
bounded 
operator and that $P_{V^\bot} = \id- P_V$, where ${V^\bot}$ 
denotes
the orthogonal complement of $V$. The partial inverse of 
$A$ 
with respect to a closed vector subspace $V \subset \H$, denoted 
by 
$A_V$, is 
defined by
\begin{equation}\label{eq:defparinv}
	(\forall (x,y) \in \H^2) \quad y \in A_V x \quad 
	\Leftrightarrow\quad  
	(P_V y + P_{V^{\bot}}x) \in A(P_V x +P_{V^{\bot}}y).
\end{equation} 
We denote by $\Gamma_0(\H)$ the class of proper lower 
semicontinuous convex functions $f\colon\H\to\RX$. Let 
$f\in\Gamma_0(\H)$.
The Fenchel conjugate of $f$ is 
defined by 
\begin{equation*}
	f^*\colon u\mapsto \sup_{x\in\H}(\scal{x}{u}-f(x)),
\end{equation*} 
which 
is a function in $\Gamma_0(\H)$. The subdifferential of $f$ is 
the maximally monotone operator
$$\partial f\colon x\mapsto \menge{u\in\H}{(\forall y\in\H)\:\: 
	f(x)+\scal{y-x}{u}\le f(y)}.$$
We have that $(\partial f)^{-1}=\partial f^*$ 
and that $\zer\,\partial f$ is the set of 
minimizers of $f$, which is denoted by $\arg\min_{x\in \H}f$. 
The proximity operator of $f$ is given by
\begin{equation}
	\label{e:prox}
	\prox_{f}\colon 
	x\mapsto\argm{y\in\H}\Big(f(y)+\frac{1}{2}\|x-y\|^2\Big).
\end{equation}
We have 
$\prox_f=J_{\partial f}$. Moreover, it follows from \cite[Theorem 
14.3]{bauschkebook2017} that
\begin{equation}
	\label{e:Moreau_nonsme}
	\prox_{\gamma f}+\gamma \prox_{f^*/\gamma} \circ 
	\id/\gamma=\id.
\end{equation}
Given a non-empty closed convex set $C\subset\H$, we denote by 
$\iota_C\in\Gamma_0(\H)$ the indicator function of $C$, which 
takes the value $0$ in $C$ and $\pinf$ otherwise, and by $N_C= 
\partial (\iota_C)$ the normal cone to $C$. 

For further properties of monotone operators,
non-expansive mappings, and convex analysis, the 
reader is referred to \cite{bauschkebook2017}.

\section{Main Results}\label{sec:mr}
This section is dedicated to the numerical resolution of 
Problem~\ref{prob:main}.
We propose the Algorithm~\ref{algo:MTC} and the
Algorithm~\ref{algo:SDR}, which are derived by combining 
partial inverses 
techniques with FHRB and with FSDR, respectively. We divide this 
section 
into two subsections, each containing one algorithm. In the first 
subsection, we present 
Algorithm~\ref{algo:MTC} and its result of convergence. In the 
second 
subsection, we provide a convergence result for FSDR, allowing 
for 
larger step-sizes. Next, we derive the convergence of 
Algorithm~\ref{algo:SDR}, similar to that of 
Algorithm~\ref{algo:MTC}.

\subsection{Forward-Reflected-Partial inverse-Backward Splitting 
} 
On this subsection, we propose the following algorithm for 
solving 
Problem~\ref{prob:main}.
\begin{algo}\label{algo:MTC}
	In the context of 
	Problem~\ref{prob:main}, 
	let $(x_{-1},x_{0},y_{0})\in V\times V\times V^{\bot}$, let 
	$\gamma \in\RPP$, define $w_{-1} = B 
	x_{-1}$, and consider 
	the recurrence
	\begin{equation}
		\label{eq:algoMTC}
		(\forall n \in \N)\quad \left\lfloor
		\begin{aligned}
			&w_n = B x_n\\
			&u_n = x_{n}+\gamma 
			y_{n}-\gamma P_V (2 w_n - w_{n-1}+Cx_n)\\
			&p_{n}=J_{{\gamma}A} u_n\\
			&x_{n+1}=P_Vp_n\\
			&y_{n+1}=y_{n}-\frac{p_n-x_{n+1}}{\gamma}.
		\end{aligned}
		\right.
	\end{equation}
\end{algo}
Next, we present the result of convergence for 
Algorithm~\ref{algo:MTC}. The proof follows a similar idea of combining partial inverses with FBHF in the proof of Theorem~3.2 in \cite{BricenoRoldanYuchaoChen}. Here, we combine partial inverses with FHRB.
\begin{teo} \label{teo:MTC}
	In the context of Problem~\ref{prob:main}, let $\gamma\in 
	\RPP$, 	let $(x_{-1},x_{0},y_{0})\in V\times 
	V\times 
	V^{\bot}$, 
	let 
	$w_{-1} = P_VBx_{-1}$,
	and let 
	$(x_n)_{n 
		\in 
		\N}$ and 
	$(y_n)_{n \in \N}$ be the sequences generated 
	Algorithm~\ref{algo:MTC}.
	Suppose that,
	\begin{equation}\label{eq:gamma}
		\gamma \in 	
		\left]0,\frac{2}{4\beta+\zeta}\right[.			
	\end{equation}
	Then,  there exist
	$\overline{x}\in \zer(A+B+C+N_V)$ and $\overline{y}\in 
	V^{\bot}\cap(A\overline{x}+P_{V}(B+C)\overline{x})$
	such that $x_{n}\weak \overline{x}$ and $y_{n}\weak 
	\overline{y}$.
\end{teo}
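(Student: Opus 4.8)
The plan is to recast Algorithm~\ref{algo:MTC} as an instance of FHRB in $\H$ driven by the partial inverse of $A$, and then to adapt the Lyapunov analysis of FHRB to obtain weak convergence. First I would introduce the combined variable $z_n := x_n + \gamma y_n$, so that, since $x_n \in V$ and $y_n \in V^{\bot}$, one has $P_V z_n = x_n$ and $P_{V^{\bot}} z_n = \gamma y_n$. Setting $\sB := P_V B P_V$ and $\sC := P_V C P_V$, the forward step rewrites as $u_n = z_n - \gamma(2\sB z_n - \sB z_{n-1} + \sC z_n)$, while the updates $x_{n+1} = P_V p_n$ and $\gamma y_{n+1} = P_{V^{\bot}}(u_n - p_n)$ give
\begin{equation*}
	z_{n+1} = P_V J_{\gamma A} u_n + P_{V^{\bot}}(u_n - J_{\gamma A} u_n).
\end{equation*}
A direct computation with \eqref{eq:defparinv} shows that the right-hand side is exactly the resolvent-type map attached to the partial inverse of $A$ evaluated at $u_n$; hence the recurrence is FHRB applied to an inclusion $0 \in \sA z + \sB z + \sC z$, where $\sA$ is built from $A_V$. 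I would then verify the standing hypotheses: $\sA$ is maximally monotone by Spingarn's theorem on partial inverses \cite{Spingran1983AMO}, and, because $P_V$ is linear, self-adjoint, idempotent and nonexpansive, $\sB$ is monotone and $\beta$-Lipschitzian while $\sC$ is $\zeta^{-1}$-cocoercive.

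Next I would characterize the zeros. Writing a candidate limit as $\overline z = \overline x + \gamma\overline y$ with $\overline x = P_V\overline z \in V$ and $\gamma\overline y = P_{V^{\bot}}\overline z \in V^{\bot}$, the fixed-point relation unwinds to $\overline y - P_V(B+C)\overline x \in A\overline x$. Adding $(B+C)\overline x$ and using $P_V + P_{V^{\bot}} = \id$ yields $\overline y + P_{V^{\bot}}(B+C)\overline x \in A\overline x + B\overline x + C\overline x$, and since this element lies in $V^{\bot} = N_V\overline x$ we obtain $0 \in A\overline x + B\overline x + C\overline x + N_V\overline x$ together with $\overline y \in V^{\bot}\cap(A\overline x + P_V(B+C)\overline x)$. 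Because $Z\neq\varnothing$, such a zero exists, so the reformulated inclusion is consistent.

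With the reformulation in place, I would run the FHRB convergence argument in the spirit of \cite{Malitsky2020SIAMJO} and of Theorem~3.2 in \cite{BricenoRoldanYuchaoChen}: fix a zero $z^*$ and build a quasi-Fej\'er/Lyapunov sequence from $\|z_n - z^*\|^2$ corrected by the forward-reflected cross term $\scal{\sB z_n - \sB z_{n-1}}{z_n - z^*}$. Monotonicity of $A$ evaluated at the mixed partial-inverse arguments $P_V\cdot + P_{V^{\bot}}\cdot$, the $\beta$-Lipschitz bound on $B$ used to dominate the reflection $2w_n - w_{n-1}$, and the $\zeta^{-1}$-cocoercivity of $C$, combined through Young's inequality, should produce the threshold under which the Lyapunov sequence is nonincreasing with summable increments, giving exactly $\gamma < 2/(4\beta+\zeta)$; in particular $(z_n)$ is bounded, $z_{n+1}-z_n \to 0$ and $w_n - w_{n-1} \to 0$. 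Opial's lemma together with the demiclosedness furnished by maximal monotonicity then identifies every weak cluster point of $(z_n)$ as a zero $\overline z$, whence $z_n \weak \overline z$; passing through the weakly continuous projections gives $x_n = P_V z_n \weak \overline x$ and $y_n \weak \overline y$, and the characterization above supplies the claimed memberships.

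The hard part will be the Lyapunov estimate. The $\gamma$-scaling that intertwines the $V$ and $V^{\bot}$ components inside the partial inverse makes the resolvent and forward parameters enter asymmetrically, so the FHRB theorem cannot be quoted verbatim and the energy inequality must be re-derived directly; the delicate point is to absorb the forward-reflected term $2\sB z_n - \sB z_{n-1}$ and the cocoercive term simultaneously so that precisely the bound $2/(4\beta+\zeta)$ survives, while keeping track of the coupling between $\overline x$ and $\overline y$ induced by the partial-inverse arguments of $A$.
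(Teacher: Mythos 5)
Your reduction is the same one the paper uses: pass to $z_n=x_n+\gamma y_n$, recognize the iteration as FHRB driven by a partial inverse, characterize the zeros, and project back. The zero characterization and the final step via weak continuity of $P_V$ are fine. The gap is in what you call ``the hard part'': you assert that the FHRB theorem cannot be quoted verbatim because the $\gamma$-scaling enters the resolvent and the forward step asymmetrically, and you therefore leave the entire energy estimate --- the only place where the threshold $2/(4\beta+\zeta)$ is actually produced --- as a plan rather than a proof. As written, the argument does not establish convergence.

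The missing idea is a rescaling that removes the asymmetry and lets you invoke \cite[Theorem~5.2]{Malitsky2020SIAMJO} off the shelf. Define
\begin{equation*}
\mathcal{A}_\gamma=(\gamma A)_V,\qquad \mathcal{B}_\gamma=\gamma P_V\circ B\circ P_V,\qquad \mathcal{C}_\gamma=\gamma P_V\circ C\circ P_V ,
\end{equation*}
i.e.\ take the partial inverse of $\gamma A$ (not $\gamma$ times the partial inverse of $A$) and absorb $\gamma$ into the single-valued operators as well. By \cite[Proposition~3.1]{Briceno2015JOTA} with $\delta=1$, the map $u_n\mapsto P_VJ_{\gamma A}u_n+P_{V^\bot}(u_n-J_{\gamma A}u_n)$ that you derived is exactly $J_{\mathcal{A}_\gamma}$ (resolvent with parameter $1$), and your forward step becomes $u_n=z_n-(2\mathcal{B}_\gamma z_n-\mathcal{B}_\gamma z_{n-1}+\mathcal{C}_\gamma z_n)$. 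Hence $(z_n)_{n\in\N}$ is \emph{literally} FHRB with step-size $\lambda=1$ applied to $\mathcal{A}_\gamma+\mathcal{B}_\gamma+\mathcal{C}_\gamma$, where $\mathcal{B}_\gamma$ is $\gamma\beta$-Lipschitzian and $\mathcal{C}_\gamma$ is $(\gamma\zeta)^{-1}$-cocoercive; the FHRB condition $\lambda=1<2/(4\gamma\beta+\gamma\zeta)$ is precisely \eqref{eq:gamma}, so no new Lyapunov inequality is needed. Note also that with your normalization $\sA$ ``built from $A_V$'' the bookkeeping would genuinely fail, since $(\gamma A)_V\neq\gamma(A_V)$ in general and $J_{\gamma A_V}$ is not the operator your algebra produces; this is exactly the obstruction you sensed, and the cure is the scaled definition above rather than a re-derivation of the energy estimate.
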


\begin{proof}
	First, define the operators:
	\begin{equation}\label{eq:defOp}
		\begin{cases}
			&\mathcal{A}_\gamma = (\gamma A)_V \colon \H \to 
			2^\H,\\
			&\mathcal{B}_\gamma = \gamma P_V \circ B \circ P_V 
			\colon 
			\H 
			\to \H,\\
			&\mathcal{C}_\gamma = \gamma P_V \circ C \circ P_V 
			\colon 
			\H 
			\to \H.
		\end{cases}
	\end{equation}
	Note that, by 
	\cite[Proposition 3.1]{Briceno2015JOTA}, 
	$\mathcal{A}_\gamma$ is maximally monotone 
	and $\mathcal{B}_\gamma$ is $\gamma\beta$-Lipschitzian. 
	Additionally, 
	by \cite[Proposition~5.1(ii)]{briceno2015Optim}, 
	$\mathcal{C}_\gamma$ is $(\gamma \zeta)^{-1}$-cocoercive. 
	Now, fix 
	$n\in\N$ and define $q_n = (u_n-p_n)/\gamma$. Thus, we 
	have $u_n=p_n+\gamma q_n$. Moreover, it follows from 
	\eqref{eq:algoMTC} that $q_n \in \gamma A 
	p_n$ and that
	\begin{align*}
		P_Vp_n+\gamma P_{V^\bot}q_n &= P_V p_n  - 
		P_{V^\bot} p_n  +  
		P_{V^\bot} u_n\\
		& = 2P_V p_n  - p_n  + (\id-P_V)u_n\\
		& = 2P_V J_{\gamma A}u_n  - J_{\gamma A}u_n  + 
		(\id-P_V)u_n\\
		& = (2P_V\circ J_{\gamma A} - 
		J_{\gamma A}+\id - P_V)u_n.
	\end{align*}
	Then, it follows from \cite[Proposition 
	3.1(i)]{Briceno2015JOTA} when 
	$\delta=1$, that 
	\begin{equation}\label{eq:rsvJA}
		P_Vp_n+\gamma P_{V^\bot}q_n=	
		J_{\mathcal{A}_\gamma} 
		u_n.
	\end{equation}
	Now, since $x_0 \in V$ and $y_0 \in V^{\bot}$, it follows 
	from 
	\eqref{eq:algoMTC}  that $(x_n)_{n \in \N}$ and 
	$(y_n)_{n \in \N}$ are sequences in $V$ and $V^{\bot}$, 
	respectively. Then, if we define $z_n = x_n +\gamma y_n$, 
	since $P_{V^{\bot}} u_n = \gamma 
	y_n$, we 
	have
	that $P_V z_{n+1} = x_{n+1} = P_Vp_n$ and that 
	\begin{align*}
		P_{V^{\bot}} 
		z_{n+1} &=  P_{V^{\bot}} (\gamma y_{n+1})\\
		&= P_{V^{\bot}} (\gamma y_{n}-(p_n-x_{n+1}))\\
		&= \gamma y_n - P_{V^{\bot}} p_n\\
		&= \gamma y_n - P_{V^{\bot}} (u_n-\gamma q_n)\\
		&= \gamma 
		P_{V^{\bot}}q_n.
	\end{align*}
	Therefore,
	\eqref{eq:algoMTC} and \eqref{eq:rsvJA} yield
	\begin{align*}
		(\forall n \in \N) \quad	z_{n+1} & = P_V z_{n+1} + 
		P_{V^{\bot}} 
		z_{n+1} \\
		& = P_V p_n + \gamma P_{V^{\bot}} q_n \\
		&= J_{\mathcal{A}_\gamma} u_n\\
		&= J_{\mathcal{A}_\gamma} 
		(x_{n}+\gamma y_{n}-\gamma P_V ( 
		2w_n-w_{n-1}+C x_n))\\
		&= J_{\mathcal{A}_\gamma} 
		(x_{n}+\gamma y_{n}-\gamma P_V ( 
		2Bx_n-Bx_{n-1}+C x_n))\\
		&= J_{ \mathcal{A}_\gamma} 
		(z_{n}- (2\mathcal{B}_\gamma 
		z_n-\mathcal{B}_\gamma z_{n-1}+\mathcal{C}_\gamma z_{n})).
	\end{align*} 
	Hence, $(z_n)_{n \in \N}$ is a particular instance of the 
	recurrence 
	in \eqref{eq:MTCOR} when 
	$\lambda=1$. Moreover, it follows from 
	\eqref{eq:gamma} that
	\begin{equation*}
		\lambda =1 \in 
		\left]0,\frac{2}{4\beta\gamma+\zeta\gamma}\right[.
	\end{equation*}  
	Therefore, \cite[Theorem~5.2]{Malitsky2020SIAMJO} implies	
	that $z_n 
	\weak \overline{z}\in 
	\zer({\mathcal{A}_\gamma+\mathcal{B}_\gamma 
		+\mathcal{C}_\gamma 
	})$. Furthermore, since $C$ is $\zeta$-Lipschitzian, 
	$\hat{B}:=B+C$ is $ 
	(\beta+\zeta)$-Lipschitzian, thus, by applying
	\cite[Proposition~3.1(iii)]{Briceno2015JOTA} to
	$\hat{B}$ and 
	$\hat{\mathcal{B}}_\gamma:=\mathcal{B}_\gamma+\mathcal{C}_\gamma$
	we have that $\hat{x} \in \H$ is a 
	solution to Problem~\ref{prob:main} if and only if 
	\begin{multline} \label{eq:solABC}
		\hat{x} \in V \ \text{ and }\ 
		\big(\exists \hat{y} \in V^\bot\cap 
		(A\hat{x}+\hat{B}\hat{x})\big)
		\ 	\text{ such that }\  \hat{x} + 	
		\gamma\big(\hat{y}-P_{V^\bot} 
		\hat{B} 
		\hat{x}\big) \in 
		\zer 
		(\mathcal{A}_\gamma+\hat{\mathcal{B}}_\gamma).
	\end{multline}
	By setting $\overline{x} = P_V 
	\overline{z}$ and $\overline{y} = P_{V^\bot} 
	\overline{z}/\gamma$, 
	we have $-(\mathcal{B}_\gamma+\mathcal{C}_\gamma) 
	(\overline{x}+\gamma\overline{y})\in 
	\mathcal{A}_\gamma(\overline{x}+\gamma\overline{y})$,
	which is equivalent to
	$-P_V (B+C) \overline{x}+\overline{y}\in A\overline{x}$. 
	Hence, 
	by 
	defining 
	$\hat{y}=\overline{y}+P_{V^{\bot}}(B+C)\overline{x}\in 
	V^{\bot}\cap (A\overline{x}+B\overline{x}+C\overline{x})$, we 
	have
	$\overline{x}+\gamma(\hat{y}-P_{V^{\bot}}(B+C)\overline{x})\in
	\zer (\mathcal{A}_\gamma + 
	\mathcal{B}_\gamma+\mathcal{C}_\gamma)$ 
	and \eqref{eq:solABC} implies that 
	$\overline{x} \in Z$ and that $\overline{y}\in 
	V^{\bot}\cap(A\overline{x}+P_{V}(B+C)\overline{x})$.
	Finally, by the weak continuity of $P_V$ and 
	$P_{V^{\bot}}$, 
	we conclude that  $x_n = 
	P_V z_n \weak P_{V} \overline{z} = \overline{x}$ and 
	$y_{n}=P_{V^{\bot}}z_{n}/\gamma\rightharpoonup 
	P_{V^{\bot}}\overline{z}/\gamma=\overline{y}$, which
	completes the proof.
\end{proof}
\begin{rem}\label{rem:1}
	\begin{enumerate}
		\item \label{rem:11} Note that, in comparison with 		
		Algorithm~\ref{algo:MTC}, the 
		method 
		proposed in 
		\cite{BricenoRoldanYuchaoChen} for solving 
		Problem~\ref{prob:main}
		require one additional activation of $B$ and one 
		additional 
		projection onto $V$ per iteration.
		\item\label{rem:12} The proof of Theorem~\ref{teo:MTC} 
		shows that 
		Algorithm~\ref{algo:MTC} is a particular instance of the 
		recurrence in \eqref{eq:MTCOR} when the 
		step-size $\lambda$ is equal to $1$. Hence, by 
		introducing  
		$\lambda  \in 
		]0,2/(\gamma(4\beta+\zeta))[$, for $\gamma \in \RPP$, and 
		considering initial points
		$(x_{-1},x_{0},y_{0})\in V\times V\times V^{\bot}$ and 
		$w_{-1} = B 
		x_{-1}$, Algorithm~\ref{algo:MTC} can be extended to the 
		following routine
		\begin{equation}
			\label{eq:algoMT}
			(\forall\, n \in \N) \quad	\left\lfloor
			\begin{aligned}
				&w_{n}=B x_n,\\
				&\text{find }\, (p_{n},q_{n})\in \mathcal{H}^2\, 
				\text{ 
					such that:}\\
				&p_{n}+\gamma 
				q_{n}=x_{n}+\gamma y_{n}-\lambda\gamma P_V ( 
				2w_n-w_{n-1}-C x_n)  \text{ and } 
				\\\
				& \quad \quad  \quad  \quad 
				\frac{P_{V}q_{n}}{\lambda}+P_{V^{\bot}}q_{n}\in 
				A\left(P_{V}p_{n}+\frac{P_{V^{\bot}}p_{n}}{\lambda}\right),\\
				&x_{n+1}=P_{V}p_{n},\\
				&y_{n+1}=P_{V^{\bot}}q_{n}.
			\end{aligned}
			\right.
		\end{equation}  
		The proof of convergence for this recurrence follows 
		similarly by 
		invoking \cite[Proposition 3.1(i)]{Briceno2015JOTA} for 
		$\delta =\lambda$. The parameter $\lambda$ can 
		be manipulated in order to accelerate the convergence. 
		However,	the 
		inclusion in \eqref{eq:algoMT} is not always easy to 
		solve, 
		hence, we prefer to present Algorithm~\ref{algo:MTC} 
		allowing 
		explicit 
		calculations in terms of the resolvent of $A$.
		\item Note that, in the case when $V=\H$, 
		Algorithm~\ref{algo:MTC} reduces to the method 
		in \eqref{eq:MTCOR} proposed in 
		\cite{Malitsky2020SIAMJO}.
		\item In the case when $C=0$, Algorithm~\ref{algo:MTC} 
		provide a recurrence for finding a zero of $A+B+N_V$  
		inheriting the benefits mentioned in \ref{rem:11} in 
		comparison with the method proposed in 
		\cite{Briceno2015JOTA}. In this case we can take $\zeta 
		\to 0$ 
		and 
		then $\gamma \in ]0,1/(2\beta)[$.
		\item In the case when $B=0$, Algorithm~\ref{algo:MTC} 
		reduced to the method in 
		\cite[Corollary~5.3]{briceno2015Optim} when $\lambda_n 
		\equiv 
		1$. In this we can take $\beta \to 0$ and then $\gamma 
		\in 
		]0,2/\zeta[$.
		\item The authors in \cite{MorinBanertGiselsson2022} 
		derived a momentum version of FHRB. Therefore, by 
		following a similar approach to the proof of 
		Theorem~\ref{teo:MTC}, we can 
		deduce the convergence of the following momentum version 
		of our method
		\begin{equation}
			(\forall n \in \N)\quad \left\lfloor
			\begin{aligned}
				&v_n = (1+\theta)(x_{n}+\gamma
				y_{n})-\theta(x_{n-1}+\gamma 
				y_{n-1})\\
				&w_n = B x_n\\
				&u_n = v_n-\gamma P_V (2 w_n - w_{n-1}+Cx_n)\\
				&p_{n}=J_{{\gamma}A} u_n\\
				&x_{n+1}=P_Vp_n\\
				&y_{n+1}=y_{n}-\frac{p_n-x_{n+1}}{\gamma},
			\end{aligned}
			\right.
		\end{equation}
		where $(x_{-1},x_{0},y_{0})\in V\times 
		V\times 
		V^{\bot}$, $w_{-1} = P_VBx_{-1}$, $\theta \in ]-1,1/3[$, 
		and the 
		step-size satisfy
		\begin{equation*}
			\gamma \in 
			\left]0,\frac{2(1-\theta-2|\theta|)}{4\beta+\zeta}\right[.
		\end{equation*}
		The main drawback of this extension, is that it is forced 
		to smaller 
		the step-sizes, which usually leads to worse numerical 
		results.
	\end{enumerate}
\end{rem}

\subsection{Forward-Partial Inverse-Shadow-Douglas--Rachford}
In this subsection, we study the weak convergence of the 
following 
algorithm for solving Problem~\ref{prob:main}.
\begin{algo}\label{algo:SDR}
	In the context of 
	Problem~\ref{prob:main}, 
	let $(x_{-1},x_{0},y_{0})\in V\times V\times V^{\bot}$, let 
	$\gamma \in\RPP$, define $w_{-1} = B 
	x_{-1}$, and consider 
	the recurrence
	\begin{equation}
		\label{eq:algoSDR}
		(\forall n \in \N)\quad \left\lfloor
		\begin{aligned}
			&w_n = B x_n\\
			&p_{n}=J_{{\gamma}A}\big(x_{n}+\gamma 
			y_{n}-\gamma P_V (w_n+Cx_n)\big)\\
			&x_{n+1}=P_V(p_n - \gamma(w_n - w_{n-1}))\\
			&y_{n+1}=y_{n}-\frac{p_n-x_{n+1}}{\gamma}.
		\end{aligned}
		\right.
	\end{equation}
\end{algo}
First, we present the following result, which allows for larger 
step-sizes in 
FSDR than those proposed in 
\cite[Theorem~3.1]{Jingjing20213OPSDR}. 
For additional details, see Remark~\ref{rem:stepsizes}. This 
result 
is of 
interest in its own right.
\begin{prop}\label{prop:Jingjing}
	In the context of Problem~\ref{prob:main}, let $\lambda \in 
	\RPP$, 
	let $(x_0,x_{-1}) \in \H^2$, and consider the sequence 
	$(x_n)_{n 
		\in \N}$ given by 
	\begin{equation}
		\label{eq:SDRCOR2}
		(\forall n \in \N)\quad \left\lfloor
		\begin{aligned}
			&x_{n+1}=J_{{\lambda}A} (x_n-\lambda 
			(B+C)x_n)-\lambda 
			(Bx_n-Bx_{n-1}).
		\end{aligned}
		\right.
	\end{equation}
	Suppose that $\lambda$ satisfies
	\begin{equation}\label{eq:gammaBSDR}
		\frac{2}{3}-(2\beta+\zeta)\lambda-\beta^2\zeta\lambda^3 > 
		0.
	\end{equation}
	Then, the sequence $(x_n)_{n\in \N}$ converges weakly to a 
	point in 
	$\zer(A+B+C)$.
\end{prop}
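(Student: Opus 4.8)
The plan is to prove weak convergence through a Lyapunov (quasi-Fej\'er) inequality followed by Opial's lemma, mirroring the argument of \cite{Jingjing20213OPSDR} but with a sharper accounting of the cross terms. Throughout, fix $x^\ast\in\zer(A+B+C)$, so that $-(B+C)x^\ast\in Ax^\ast$, and introduce the shadow point $p_n=J_{\lambda A}(x_n-\lambda(B+C)x_n)$; then the recurrence \eqref{eq:SDRCOR2} reads $x_{n+1}=p_n-\lambda(Bx_n-Bx_{n-1})$. The defining inclusion of the resolvent gives $\tfrac1\lambda(x_n-p_n)-(B+C)x_n\in Ap_n$, and testing the monotonicity of $A$ against the pair $(x^\ast,-(B+C)x^\ast)$, after multiplying by $2\lambda$ and using the polarization identity, yields the basic estimate
\begin{equation*}
\|p_n-x^\ast\|^2\le \|x_n-x^\ast\|^2-\|x_n-p_n\|^2-2\lambda\scal{p_n-x^\ast}{(B+C)x_n-(B+C)x^\ast}.
\end{equation*}

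First I would convert this into an estimate for $\|x_{n+1}-x^\ast\|^2$ via $p_n=x_{n+1}+\lambda(Bx_n-Bx_{n-1})$, which introduces the reflection terms $-2\lambda\scal{x_{n+1}-x^\ast}{Bx_n-Bx_{n-1}}-\lambda^2\|Bx_n-Bx_{n-1}\|^2$. Next I would split the coupling through $x_n$, writing $\scal{p_n-x^\ast}{\cdot}=\scal{x_n-x^\ast}{\cdot}+\scal{p_n-x_n}{\cdot}$: the part anchored at $x_n-x^\ast$ is controlled by the monotonicity of $B$ (giving a nonnegative, hence discardable, contribution) and the $\zeta^{-1}$-cocoercivity of $C$ (producing a favourable $-2\lambda\zeta^{-1}\|Cx_n-Cx^\ast\|^2$), while the part anchored at $p_n-x_n$ is a genuine cross term to be absorbed by Young's inequality.

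The crux is the Lyapunov function. Since $Bx_n-Bx_{n-1}=(Bx_n-Bx^\ast)-(Bx_{n-1}-Bx^\ast)$, the reflection term telescopes against an inner-product correction, which suggests an energy of the form
\begin{equation*}
\Phi_n:=\|x_n-x^\ast\|^2+2\lambda\scal{x_n-x^\ast}{Bx_{n-1}-Bx^\ast}+\mu\|x_n-x_{n-1}\|^2
\end{equation*}
(or a close variant), with $\mu>0$ to be tuned so that $\Phi_n$ is bounded below by a positive multiple of $\|x_n-x^\ast\|^2$. The aim is an inequality
\begin{equation*}
\Phi_{n+1}\le\Phi_n-\kappa_1\|x_n-p_n\|^2-\kappa_2\|x_n-x_{n-1}\|^2-\kappa_3\|Cx_n-Cx^\ast\|^2
\end{equation*}
with all $\kappa_i\ge0$. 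Here the residual cross terms are bounded using $\|Bx_n-Bx_{n-1}\|\le\beta\|x_n-x_{n-1}\|$ for the Lipschitz part and the cocoercive surplus $\zeta^{-1}\|Cx_n-Cx^\ast\|^2$ to dominate the $C$-cross term. The hard part will be this Young bookkeeping: the three weights must be chosen \emph{jointly and optimally} so that nonnegativity of all $\kappa_i$ follows from the single scalar condition \eqref{eq:gammaBSDR}, rather than from a more conservative bound; the cubic term $\beta^2\zeta\lambda^3$ arises precisely from chaining the Lipschitz bound on the reflection (two factors of $\lambda$ with $\beta^2$) through the cocoercivity estimate (one factor of $\lambda$ with $\zeta$), and the constant $\tfrac23$ emerges as the worst-case Young constant. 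This is the step that improves on \cite{Jingjing20213OPSDR}.

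Finally, telescoping the energy inequality yields $\sum_n\bigl(\|x_n-p_n\|^2+\|x_n-x_{n-1}\|^2\bigr)<\infty$, whence $x_n-p_n\to0$ and $x_n-x_{n-1}\to0$ strongly, and $\Phi_n$—hence $\|x_n-x^\ast\|$—converges for every $x^\ast\in\zer(A+B+C)$. Since $B+C$ has full domain, $A+B+C$ is maximally monotone, and rewriting the resolvent inclusion as $\tfrac1\lambda(x_n-p_n)+(B+C)p_n-(B+C)x_n\in(A+B+C)p_n$ exhibits graph pairs whose right-hand sides tend to $0$ strongly (using the Lipschitz continuity of $B+C$ together with $x_n-p_n\to0$); weak-strong closedness of the graph then forces every weak cluster point of $(p_n)$, equivalently of $(x_n)$, into $\zer(A+B+C)$. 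Opial's lemma \cite{bauschkebook2017} upgrades this to weak convergence of the whole sequence to a point of $\zer(A+B+C)$, completing the argument.
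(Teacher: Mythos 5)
Your strategy coincides with the paper's own: both run a quasi-Fej\'er argument on an energy of the form $\|x_n-x^\ast\|^2+2\lambda\scal{x_n-x^\ast}{Bx_{n-1}-Bx^\ast}+(\text{const})\,\|x_n-x_{n-1}\|^2$ (the paper packages the first two terms, together with $\lambda^2\|Bx_{n-1}-Bx^\ast\|^2$, as $\|z_n-z\|^2$ with $z_n=x_n+\lambda Bx_{n-1}$ and $z=x^\ast+\lambda Bx^\ast$), then telescope to obtain summability of $\|x_{n+1}-x_n\|^2$, identify weak cluster points through the weak--strong closedness of a maximally monotone graph, and conclude with Opial. Your basic resolvent estimate is correct, and your endgame (demiclosedness applied directly to $A+B+C$, maximally monotone because $B+C$ is monotone, Lipschitzian and everywhere defined) is a legitimate, slightly more direct variant of the paper's product-space formulation borrowed from Csetnek--Malitsky--Tam.

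The genuine gap is that you stop exactly where the content of the proposition lies: the Young bookkeeping that must convert your three unknown weights into the single threshold \eqref{eq:gammaBSDR} is announced but never carried out, and the hedges ``to be tuned'', ``or a close variant'', ``the hard part will be'' leave the decisive inequality unproved. Without that computation you have not shown that the admissible step-sizes are governed by $\tfrac{2}{3}-(2\beta+\zeta)\lambda-\beta^2\zeta\lambda^3>0$ rather than by a more conservative bound, and that threshold is the entire improvement over \cite{Jingjing20213OPSDR}. For comparison, the paper closes this step economically: it does not redo the $B$-only analysis but imports the inequality of \cite[Lemma~3, Eqs.~(28)--(29)]{Csetnek-2019-AMO}, so that only the two new terms $2\lambda\scal{Cx_n-Cx^\ast}{x^\ast-x_{n+1}}$ and $2\lambda\scal{Cx^\ast-Cx_n}{y_n-y_{n-1}}$, with $y_n=\lambda Bx_n$, need to be absorbed. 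Each is handled by one application of cocoercivity and one application of Young with parameter $\zeta$: the first costs $\zeta\lambda\|x_{n+1}-x_n\|^2$, the second costs $\zeta\lambda\|y_n-y_{n-1}\|^2\le\zeta\beta^2\lambda^3\|x_n-x_{n-1}\|^2$ (this is precisely where the cubic term originates), and both are paid for by the cocoercive surplus $-\tfrac{2\lambda}{\zeta}\|Cx_n-Cx^\ast\|^2$; keeping the telescoping weight $\lambda\beta+\zeta\beta^2\lambda^3$ on the increment then leaves the margin $\varepsilon=\tfrac{2}{3}-2\beta\lambda-\zeta\lambda-\zeta\beta^2\lambda^3$, which is positive exactly under \eqref{eq:gammaBSDR}. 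You should either reproduce this computation or perform your own joint weight optimization explicitly; as written, the proposal is a correct plan rather than a proof.
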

\begin{proof}
	Let $x \in \zer (A+B+C)$, set $y =\lambda B x$, and set, for 
	every $n 
	\in 
	\N$, $y_n=\lambda B x_n$ and $z_n=x_n+y_{n-1}$. By proceeding 
	similar to the proof of \cite[Lemma~3]{Csetnek-2019-AMO}, by 
	\eqref{eq:SDRCOR2} 
	and the monotonicity of $B$, we conclude that
	\begin{align}\label{eq:CMTp1}
		\|z_{n+1}-z\|^2 & \leq \|z_n-z\|^2 
		+4\scal{y_n-y_{n-1}}{x_n-x_{n+1}}-\|x_{n+1}-x_{n}\|^2-3\|y_n-y_{n-1}\|^2\nonumber\\
		& \hspace*{2cm}+
		2\lambda\scal{Cx_n-Cx}{x-x_{n+1}}+2\lambda\scal{Cx-Cx_n}{y_n-y_{n-1}}.
	\end{align}
	Now, by the cocoercivity of $C$ and the Young's inequality, 
	we have
	\begin{align}\label{eq:CMTp2}
		2\lambda\scal{Cx_n-Cx}{x-x_{n+1}} & = 
		2\lambda\scal{Cx_n-Cx}{x-x_{n}} + 
		2\lambda\scal{Cx_n-Cx}{x_n-x_{n+1}}\nonumber\\
		&\leq  -\frac{2\lambda}{\zeta}\|Cx_n-Cx\|^2
		+\frac{\lambda}{\zeta}\|Cx_n-Cx\|^2 + \lambda 
		\zeta\|x_{n+1}-x_n\|^2 \nonumber\\
		&\leq  -\frac{\lambda}{\zeta}\|Cx_n-Cx\|^2 + \lambda 
		\zeta\|x_{n+1}-x_n\|^2.
	\end{align}
	Additionally, by the Young's inequality and the Lipschitzian 
	property of $B$, 
	we conclude
	\begin{align}\label{eq:CMTp3}
		2\lambda\scal{Cx-Cx_n}{y_n-y_{n-1}} &\leq 
		\frac{\lambda}{\zeta}\|Cx_n-Cx\|^2+\zeta\lambda\|y_n-y_{n-1}\|^2\nonumber\\
		&\leq 
		\frac{\lambda}{\zeta}\|Cx_n-Cx\|^2+\zeta\beta^2\lambda^3\|x_n-x_{n-1}\|^2
	\end{align}
	Hence, by defining $\varepsilon = \frac{2}{3} - \lambda\zeta 
	- 
	2\lambda\beta-\zeta\beta^2\lambda^3>0$, it follows from 
	\eqref{eq:CMTp1}-\eqref{eq:CMTp3} and 
	\cite[Equation~(28) \& Equation~(29)]{Csetnek-2019-AMO} that
	\begin{align}\label{eq:CMTp5}
		\|z_{n+1}-z\|^2 
		+(\lambda\beta+\zeta\beta^2\lambda^3+\varepsilon)\|x_{n+1}-x_{n}\|^2&
		\leq \|z_n-z\|^2 + 
		(\lambda\beta+\zeta\beta^2\lambda^3)\|x_{n} -x_{n-1}\|^2.
	\end{align}
	Additionally, it follows from \eqref{eq:SDRCOR2} that
	\begin{align}\label{eq:CMTp6}
		&-\begin{pmatrix}
			z_{n+1}-z_n-\lambda\big(Cx_n - 
			C(z_{n+1}-z_n+x_n)\big)\\
			z_{n+1}-z_n
		\end{pmatrix} \nonumber\\
		&\hspace*{5cm}\in \left( \begin{bmatrix} \lambda(A+C)\\ 
			(\lambda 
			B)^{-1}\end{bmatrix} + \begin{bmatrix}
			0 & \id \\
			-\id & 0
		\end{bmatrix}\right) \begin{pmatrix}
			z_{n+1}-z_n+x_n\\
			z_{n+1}-x_{n+1}
		\end{pmatrix}.
	\end{align}
	Moreover, by the cocoercivity of $C$, we have
	\begin{equation}\label{eq:CMTp7}
		\|Cx_n - C(z_{n+1}-z_n+x_n)\| \leq \zeta \|z_{n+1}-z_n\|,
	\end{equation}
	then, if $\|z_{n+1}-z_n\| \to 0$, we have $\|Cx_n - 
	C(z_{n+1}-z_n+x_n)\|  \to 0$. Finally, by noticing that $A+C$ 
	is 
	maximally monotone \cite[Corollary 25.5]{bauschkebook2017} 
	and 
	combining \eqref{eq:CMTp5}-\eqref{eq:CMTp7}, the result 
	follows 
	similarly to the proof of 
	\cite[Theorem~3]{Csetnek-2019-AMO}.
\end{proof}
\begin{rem}\label{rem:stepsizes} The step-sizes in
	\cite[Theorem~3.1]{Jingjing20213OPSDR} that guaranteed the 
	convergence of the recurrence in \eqref{eq:SDRCOR2}
	are $\lambda \in ]0,2/(9\beta+3\zeta)[$. We can note that:
	\begin{enumerate}
		\item If  $\lambda \in 
		]0,2/(9\beta+3\zeta)[$, then $\lambda^2 \zeta \beta <1$ 
		and 
		\eqref{eq:gammaBSDR} is 
		directly 
		satisfied.  
		\item In the case when $\zeta = 0$, the condition 
		$\lambda 
		\in 
		]0,2/(9\beta+3\zeta)[$ no longer reduces to the condition 
		$\lambda \in ]0,1/(3\beta)[$ in 
		\cite[Theorem~3]{Csetnek-2019-AMO}.  On the other hand, 
		if 
		$\zeta =0$, from \eqref{eq:gammaBSDR}, we can recover the 
		step-sizes in \cite[Theorem~3]{Csetnek-2019-AMO}.
		\item Since $C$ is cocoercive, it is $\zeta$-Lipschitz 
		and 
		$B+D$ is $(\beta+\zeta)$-Lipschitz. Hence, SDR can be 
		applied 
		for finding a zero of $A+B+C$. 
		In that case, the step-size should satisfy $\lambda < 
		1/(3\zeta+3\beta)$. Note that, if $\beta> \zeta$ we have 
		$2/(9\beta+3\zeta) <1/(3\zeta+3\beta)$, then, SDR allows 
		bigger step-sizes than 
		\cite[Theorem~3.1]{Jingjing20213OPSDR}. On the other 
		hand, in 
		the case when 
		$\beta>\zeta$, we have
		\begin{equation}\label{eq:remgam}
			\frac{2}{3}-(2\beta+\zeta)\lambda-\beta^2\zeta\lambda^3
			>  	\frac{2}{3}-3\beta\lambda-\beta^3\lambda^3.
		\end{equation}
		Note that, by introducing the variable, $x:=\beta\lambda$ 
		the 
		right side of \eqref{eq:remgam} reduces to the 
		function $f \colon x \mapsto 2/3-3x-x^3$. From the 	
		graph\footnote{\url{https://www.geogebra.org/calculator/dhvvswa6}}
		of $f$
		we observe that if $x \in ]0,2/5]$ we have $f(x) > 0$, 
		then 
		\eqref{eq:remgam} is strictly positive. 
		Thus, if $\lambda\beta < 2/5$, from \eqref{eq:remgam} we 
		conclude 
		that $\gamma$ satisfies
		\eqref{eq:gammaBSDR}. 
		Additionally, we have 
		$1/(3\beta+3\zeta)<1/(3\beta)<2/(5\beta)$, 
		therefore, 
		step-sizes 
		satisfying 
		\eqref{eq:gammaBSDR} are larger than the step-sizes 
		allowed by 
		SDR 
		and by \cite[Theorem~3.1]{Jingjing20213OPSDR}.
	\end{enumerate}
\end{rem}
Next, we present the result of convergence for 
Algorithm~\ref{algo:SDR} whose proof is similar to the proof of 
Theorem~\ref{teo:MTC}.
\begin{teo} \label{teo:SDR}
	In the context of Problem~\ref{prob:main}, let $\gamma\in 
	\RPP$, 	let $(x_{-1},x_{0},y_{0})\in V\times 
	V\times 
	V^{\bot}$, 
	let 
	$w_{-1} = Bx_{-1}$,
	and let 
	$(x_n)_{n 
		\in 
		\N}$ and 
	$(y_n)_{n \in \N}$ be the sequences generated by
	Algorithm~\ref{algo:SDR}.
	Suppose that $\gamma$ satisfies,
	\begin{equation}\label{eq:gammaSDR}
		\frac{2}{3}-(2\beta+\zeta)\gamma-\beta^2\zeta\gamma^3 > 0.
	\end{equation}
	Then,  there exist
	$\overline{x}\in \zer(A+B+C+N_V)$ and $\overline{y}\in 
	V^{\bot}\cap(A\overline{x}+P_{V}(B+C)\overline{x})$
	such that $x_{n}\weak \overline{x}$ and $y_{n}\weak 
	\overline{y}$.
\end{teo}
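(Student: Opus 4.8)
The plan is to mirror the proof of Theorem~\ref{teo:MTC}, replacing the appeal to FHRB by the extended FSDR convergence established in Proposition~\ref{prop:Jingjing}. First I would introduce the same scaled operators $\mathcal{A}_\gamma = (\gamma A)_V$, $\mathcal{B}_\gamma = \gamma P_V\circ B\circ P_V$ and $\mathcal{C}_\gamma = \gamma P_V\circ C\circ P_V$, recalling from \cite[Proposition~3.1]{Briceno2015JOTA} and \cite[Proposition~5.1(ii)]{briceno2015Optim} that $\mathcal{A}_\gamma$ is maximally monotone, $\mathcal{B}_\gamma$ is $\gamma\beta$-Lipschitzian, and $\mathcal{C}_\gamma$ is $(\gamma\zeta)^{-1}$-cocoercive. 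The goal of the reduction is to show that the single sequence $z_n = x_n + \gamma y_n$ produced by Algorithm~\ref{algo:SDR} is exactly the FSDR iteration \eqref{eq:SDRCOR2}, with $\lambda = 1$, run on the triple $(\mathcal{A}_\gamma,\mathcal{B}_\gamma,\mathcal{C}_\gamma)$.

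For the reduction I would set $u_n = x_n + \gamma y_n - \gamma P_V(w_n + Cx_n)$ (the resolvent argument), $p_n = J_{\gamma A}u_n$, and $q_n = (u_n - p_n)/\gamma$. Exactly as in Theorem~\ref{teo:MTC}, \cite[Proposition~3.1(i)]{Briceno2015JOTA} with $\delta = 1$ yields the identity $J_{\mathcal{A}_\gamma}u_n = P_Vp_n + \gamma P_{V^\bot}q_n$. Using $x_0\in V$, $y_0\in V^\bot$, one checks that $x_n = P_Vz_n\in V$ and $\gamma y_n = P_{V^\bot}z_n\in V^\bot$ for all $n$, whence $\mathcal{B}_\gamma z_n = \gamma P_V w_n$ and $\mathcal{C}_\gamma z_n = \gamma P_V Cx_n$. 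The point distinguishing this computation from the FHRB one is that in \eqref{eq:algoSDR} the reflection enters as the a posteriori correction $-\gamma(w_n - w_{n-1})$ rather than inside the resolvent: because this term lies in $V$, it affects only the $V$-component of $z_{n+1}$, giving $P_Vz_{n+1} = P_Vp_n - \gamma P_V(w_n - w_{n-1}) = x_{n+1}$ while $P_{V^\bot}z_{n+1} = \gamma P_{V^\bot}q_n = \gamma y_{n+1}$. Collecting the two components gives
\[
z_{n+1} = J_{\mathcal{A}_\gamma}\big(z_n - (\mathcal{B}_\gamma + \mathcal{C}_\gamma)z_n\big) - \big(\mathcal{B}_\gamma z_n - \mathcal{B}_\gamma z_{n-1}\big),
\]
which is precisely \eqref{eq:SDRCOR2} with $\lambda = 1$ for $(\mathcal{A}_\gamma,\mathcal{B}_\gamma,\mathcal{C}_\gamma)$.

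Next I would verify the step-size hypothesis. Substituting the effective constants $\gamma\beta$ (Lipschitz) and $\gamma\zeta$ (reciprocal cocoercivity) together with $\lambda = 1$ into condition \eqref{eq:gammaBSDR} of Proposition~\ref{prop:Jingjing} returns exactly $\tfrac{2}{3} - (2\beta+\zeta)\gamma - \beta^2\zeta\gamma^3 > 0$, i.e.\ \eqref{eq:gammaSDR}. Hence Proposition~\ref{prop:Jingjing} applies and gives $z_n \weak \overline{z} \in \zer(\mathcal{A}_\gamma + \mathcal{B}_\gamma + \mathcal{C}_\gamma)$. From here the argument is identical to the end of the proof of Theorem~\ref{teo:MTC}: applying \cite[Proposition~3.1(iii)]{Briceno2015JOTA} to $\hat B = B + C$ and $\hat{\mathcal{B}}_\gamma = \mathcal{B}_\gamma + \mathcal{C}_\gamma$, setting $\overline{x} = P_V\overline{z}$ and $\overline{y} = P_{V^\bot}\overline{z}/\gamma$, and using the characterization \eqref{eq:solABC}, one obtains $\overline{x}\in\zer(A+B+C+N_V)$ and $\overline{y}\in V^\bot\cap(A\overline{x}+P_V(B+C)\overline{x})$. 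The weak continuity of $P_V$ and $P_{V^\bot}$ then yields $x_n = P_Vz_n\weak\overline{x}$ and $y_n = P_{V^\bot}z_n/\gamma\weak\overline{y}$.

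I expect the main obstacle to be the reduction step, specifically the bookkeeping of the correction term: one must confirm that $-\gamma(w_n - w_{n-1})\in V$ is absorbed into the $x$-update while the $V^\bot$-component (hence the $y$-update) coincides with $P_{V^\bot}q_n$, so that $(y_n)_{n\in\N}$ genuinely remains in $V^\bot$ and $z_n$ satisfies the scalar FSDR recurrence. Once this identification is secured, the step-size bookkeeping is immediate, since Proposition~\ref{prop:Jingjing} was tailored to reproduce \eqref{eq:gammaSDR} under the substitution $\beta\mapsto\gamma\beta$, $\zeta\mapsto\gamma\zeta$, $\lambda\mapsto 1$, and the translation back to Problem~\ref{prob:main} is verbatim from Theorem~\ref{teo:MTC}.
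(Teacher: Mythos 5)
Your proposal follows the paper's own route exactly: identify $z_n=x_n+\gamma y_n$ with the FSDR recurrence \eqref{eq:SDRCOR2} run with $\lambda=1$ on the scaled operators $(\mathcal{A}_\gamma,\mathcal{B}_\gamma,\mathcal{C}_\gamma)$, invoke Proposition~\ref{prop:Jingjing} (whose hypothesis under the substitution $\beta\mapsto\gamma\beta$, $\zeta\mapsto\gamma\zeta$ is precisely \eqref{eq:gammaSDR}), and translate back through the characterization \eqref{eq:solABC} as in Theorem~\ref{teo:MTC}. The step-size bookkeeping and the final translation are correct.

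The point you yourself single out as the main obstacle --- that the $y$-update coincides with $P_{V^\bot}q_n$ --- is indeed the crux, and you assert it without carrying out the check; for Algorithm~\ref{algo:SDR} exactly as printed the check fails. Since $x_{n+1}=P_V\bigl(p_n-\gamma(w_n-w_{n-1})\bigr)=P_Vp_n-\gamma P_V(w_n-w_{n-1})$, one has
\[
p_n-x_{n+1}=P_{V^\bot}p_n+\gamma P_V(w_n-w_{n-1}),
\]
so the printed update $y_{n+1}=y_n-(p_n-x_{n+1})/\gamma$ yields $y_{n+1}=P_{V^\bot}q_n-P_V(w_n-w_{n-1})$, which acquires a nonzero $V$-component and in particular leaves $V^\bot$; your claimed identity $\gamma y_{n+1}=P_{V^\bot}z_{n+1}=\gamma P_{V^\bot}q_n$ then fails, and the reduction to \eqref{eq:SDRCOR2} breaks down from the next iteration on. The identity does hold if the update is read as $y_{n+1}=y_n-(p_n-P_Vp_n)/\gamma$, i.e.\ with $P_Vp_n$ in place of $x_{n+1}$; this is evidently the intended reading, since the correction term $-\gamma(w_n-w_{n-1})$ must enter only the $V$-component of the FSDR iterate, and the generalized routine \eqref{eq:algoCMT} in Remark~\ref{rem:2} sets $y_{n+1}=P_{V^\bot}q_n$ outright. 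With that reading your argument is complete and matches the paper's (much terser) proof; as written, the one step you flagged as delicate is exactly the one that does not survive literal substitution, so you should make the verification explicit rather than asserting it.
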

\begin{proof}
	By proceeding similarly to the proof of 
	Theorem~\ref{teo:MTC}, we 
	have that the sequence $(z_n)_{n \in \N}$, given by 
	$z_n=x_n+\lambda y_n$, corresponds to the sequence generated 
	by 
	\eqref{eq:SDRCOR2} for $\lambda = 1$ and the operators 
	defined in 
	\eqref{eq:defOp}. Therefore, by noticing that $\lambda=1$ 
	satisfies,
	\begin{equation}
		\frac{2}{3}-(2(\beta\gamma)+(\zeta\gamma))\lambda-(\beta\gamma)^2(\zeta\gamma)\cdot
		\lambda^3 > 0,
	\end{equation}
	the result follows by Proposition~\ref{prop:Jingjing}.
\end{proof}
\begin{rem}\label{rem:2}
	\begin{enumerate}
		\item Note that, Algorithm~\ref{algo:SDR} has the same 
		advantages mentioned in Remark~\ref{rem:1}.\ref{rem:11} 
		over 
		the method proposed in 
		\cite{BricenoRoldanYuchaoChen}.
		\item\label{rem:22}  Similarly to 
		Remark~\ref{rem:1}.\ref{rem:11}, by considering an 
		additional 
		parameter $\lambda \in \RPP$, we can extend 
		Algorithm~\ref{algo:SDR} to the 
		following routine  
		\begin{equation}
			\label{eq:algoCMT}
			(\forall\, n \in \N) \quad	\left\lfloor
			\begin{aligned}
				&w_{n}=B x_n,\\
				&\text{find }\, (p_{n},q_{n})\in \mathcal{H}^2\, 
				\text{ 
					such that:}\\
				&p_{n}+\gamma 
				q_{n}=x_{n}+\gamma y_{n}-\lambda P_V (w_n+Cx_n)  
				\text{ 
					and } 
				\\\
				& \quad \quad  \quad  \quad 
				\frac{P_{V}q_{n}}{\lambda}+P_{V^{\bot}}q_{n}\in 
				A\left(P_{V}p_{n}+\frac{P_{V^{\bot}}p_{n}}{\lambda}\right),\\
				&x_{n+1}=P_{V}(p_{n}-\lambda(w_n-w_{n-1})),\\
				&y_{n+1}=P_{V^{\bot}}q_{n}.
			\end{aligned}
			\right.
		\end{equation}
		In this case, the weak convergence of $(x_n)_{n \in \N}$ 
		to a point 
		in $Z$, is guaranteed for $\gamma \in \RPP$ and 
		$(\lambda\gamma)$ 
		satisfying 
		\eqref{eq:gammaBSDR}.
	\end{enumerate}
\end{rem}

\section{Applications}\label{sec:app}
\subsection{Extension to multivariate monotone inclusions}
In this section, we aim to solve the following composite 
primal-dual monotone inclusion.
\begin{pro}\label{pro:multi}
	Let $(I,J)\in \N^2$, set $\mathcal{I}=\{1,\ldots,I\}$ and $\mathcal{J}=\{1,\ldots,J\}$. For every $i \in \mathcal{I}$ and every $j \in \mathcal{J}$, let $\H_i$ and $\G_j$ be real Hilbert spaces. Set $\bm{\H} = \oplus_{i\in \mathcal{I}} \H_i$ and $\bm{\G} = \oplus_{j \in \mathcal{J}} \G_j$. For every $i \in \mathcal{I}$ and every $j \in \mathcal{J}$, let $A_i\colon \H_i\to 2^{\H_i}$ and $M_{j} \colon \G_j \to 2^{\G_j}$ be maximally monotone operators, let $L_{i,j} \colon \H_i \to \G_j$ be a bounded linear operator, let $C_i \colon \H_i \to \H_i$ be $\zeta_i^{-1}$-cocoercive for $\zeta_i \in \RPP$, let $\bm{B} \colon \bm{\H} \to \bm{\H} \colon \x \mapsto (B_i\x)_{i \in I}$, be $\tilde{\beta}$-Lipschitzian for $\tilde{\beta}\in \RPP$, let $N_j \colon \G_j \to \G_j$ be monotone such that $N_j^{-1}$ is $\nu_i$-Lipschitzian, for $\nu_j \in \RPP$,  let $D_j \colon \G_j \to \G_j$ be maximally monotone and $\delta_{j}$-strongly monotone, for $\delta_{i} \in \RPP$, and let $V_i$ be a closed subspace of $\H_i$. We aim to solve the system of primal inclusions
	\begin{align}\label{eq:multipri}
		&\text{ find } \x \in \bm{\H} \text{ such that} \nonumber\\
		&(\forall i \in \mathcal{I}) \quad  0 \in A_i x_i +\sum_{j \in \mathcal{J}} L^*_{i,j}(M_j\infconv N_j  \infconv D_j) \left(\sum_{i \in I}L_{i,j}x_i\right)  + B_i \x + C_ix_i + N_{V_i}x_i ,  
	\end{align} 
	together with the associated system of dual inclusions
	\begin{align}\label{eq:multidual}
		\text{ find }& \bm{u} \in  \bm{\G}
		\text{ such that} \nonumber \\
		& (\exists \x \in \bm{\H})\ (\forall i \in \mathcal{I})\ (\forall j \in \mathcal{J})\quad  
		\begin{cases}
			-\sum_{j \in \mathcal{J}} L^*_{i,j}u_{j} \in A_i x_i + B_i x_i + C_i\x+N_{V_i}x_i,\\
			u_{j} \in (M_j\infconv N_j  \infconv D_j)\left(\sum_{i \in \mathcal{I}}L_{i,j}x_i\right),
		\end{cases}
	\end{align}
	under the assumption that the primal-dual solution set $\widetilde{\bm{Z}}$ is not empty.
\end{pro}
This multivariate inclusion problem has been studied in \cite{AttouchBricenoCombettes2010,CombettesMinh2022,Comb13,CombettesEckstein2018MP} in the case when, for every $i \in \mathcal{I}$, $V_i=\H_i$. As a consequence of Theorem~\ref{teo:MTC} and Theorem~\ref{teo:SDR}, we propose two algorithms for solving Problem~\ref{pro:multi} requiring, for each $i \in I$ and each $k \in K$, only one activation of $L_{i,k}$ and $L^*_{i,k}$ at each iteration.
\begin{cor}\label{cor:MTCMulti}
	In the context of 
	Problem~\ref{pro:multi}, for every $i \in \mathcal{I}$, 
	let $(x_{-1}^{1,i},x_{0}^{1,i},y_{0}^{1,i})\in V_i\times V_i\times V_i^{\bot}$, for every $j \in \mathcal{J}$, 
	let $(x_{-1}^{2,j},x_{0}^{2,j})\in {\G}_j \times {\G}_j$, and define $\bm{x}_{-1}^1 = (x_{-1}^{1,i})_{i \in \mathcal{I} }$,  $w_{-1}^{1,i} = B_i \x_{-1}^1-\sum_{j \in \mathcal{J}}L^*_{i,j}x_{-1}^{2,j}$, and $w_{-1}^{2,j} = 
	N_j^{-1} x_{-1}^{2,j}+\sum_{i \in \mathcal{I}}L_{i,j}x_{-1}^{1,i}$. Moreover, set $\ell = \sum_{k\in K}\left(\sum_{i \in I}\|L_{i,k}\|\right)^2$, $\beta = \max\{\widetilde{\beta},\nu_1,\ldots,\nu_J\} + \sqrt{\ell}$, $\zeta = 
	\max\{\zeta_1,\ldots,\zeta_I, \delta_1,\ldots,\delta_J\}$,  and  let 
	\begin{equation}\label{eq:gammaMulti}
		\gamma \in 	
		\left]0,\frac{2}{4\beta+\zeta}\right[.			
	\end{equation}
	Consider the sequences $(\x_n^1)_{n \in \N}=(x_n^{1,1},\ldots,x_n^{1,I})_{n \in \N}$ and $(\x_n^2)_{n \in \N}=(x_n^{2,1},\ldots,x_n^{2,J})_{n \in \N}$ defined recursively by
	\begin{equation}
		\label{eq:algoMTCMulti}
		(\forall n \in \N)\quad \left\lfloor
		\begin{aligned}
			&\text{for } i=1,\ldots,I\\
			&\left\lfloor
			\begin{aligned}
				&w_n^{1,i} = B_i \x_n^1-\sum_{j \in \mathcal{J}}L^*_{i,j}x_n^{2,j}\\
				&u_n^{1,i} = x_n^{1,i} +\gamma y_n^{1,i} - \gamma P_{V_i} (2w_n^{1,i}-w_{n-1}^{1,i}+C_ix_n^{1,i})\\
				&p_n^{1,i} = J_{\gamma A_i} u_n^{1,i}\\
				&x_{n+1}^{1,i} = P_{V_i} p_n^{1,i}\\
				&y_{n+1}^{1,i}=y_{n}^{1,i}-\frac{p_n^{1,i}-x_{n+1}^{1,i}}{\gamma}.
			\end{aligned}\right.\\
			&\text{for } j=1,\ldots,J\\
			&\left\lfloor
			\begin{aligned}
				&w_n^{2,j} = N_j^{-1} x_n^{2,j}+\sum_{i \in \mathcal{I}}L_{i,j}x_n^{1,i}\\
				&u_n^{2,j} = x_n^{2,j}  - \gamma  (2w_n^{2,j}-w_{n-1}^{2,j}+D_j^{-1}x_n^{2,j})\\
				&x_{n+1}^{2,j} = J_{\gamma M^{-1}_j} u_n^{2,j}
			\end{aligned}\right.
		\end{aligned}
		\right.
	\end{equation}
	Therefore, $(\bm{x}_n^1)_{n \in \N}$ and $(\bm{x}_n^2)_{n \in \N}$ converges weakly to a solution to \eqref{eq:multipri} and \eqref{eq:multidual}, respectively.
\end{cor}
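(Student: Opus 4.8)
The plan is to realize Problem~\ref{pro:multi} as a single instance of Problem~\ref{prob:main} on the product space $\bm K:=\bm\H\oplus\bm\G$, and then to check that Algorithm~\ref{algo:MTC} applied to that instance is exactly the recurrence \eqref{eq:algoMTCMulti}; the conclusion then follows directly from Theorem~\ref{teo:MTC}. For $\bm p=(\x,\bm u)\in\bm K$, writing $\bm L\colon\bm\H\to\bm\G\colon\x\mapsto\big(\sum_{i\in\mathcal I}L_{i,j}x_i\big)_{j\in\mathcal J}$, I would set
\begin{equation*}
	\mathbf A\bm p=\Big(\big(A_ix_i\big)_{i},\big(M_j^{-1}u_j\big)_{j}\Big),\qquad
	\mathbf C\bm p=\Big(\big(C_ix_i\big)_{i},\big(D_j^{-1}u_j\big)_{j}\Big),
\end{equation*}
\begin{equation*}
	\mathbf B\bm p=\Big(\big(B_i\x-\textstyle\sum_{j}L_{i,j}^*u_j\big)_{i},\big(N_j^{-1}u_j+\sum_{i}L_{i,j}x_i\big)_{j}\Big),
\end{equation*}
together with the closed vector subspace $\bm V:=\big(\prod_{i\in\mathcal I}V_i\big)\oplus\bm\G$, whose orthogonal complement is $\bm V^{\bot}=\big(\prod_{i}V_i^{\bot}\big)\oplus\{0\}$.

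The first task is to confirm the standing hypotheses of Problem~\ref{prob:main} on $\bm K$. The operator $\mathbf A$ is maximally monotone as a direct sum of the maximally monotone operators $A_i$ and $M_j^{-1}$ (the inverse of a maximally monotone operator being maximally monotone). Splitting $\mathbf B=\mathbf B_0+\mathbf S$, where $\mathbf B_0\colon(\x,\bm u)\mapsto\big((B_i\x)_i,(N_j^{-1}u_j)_j\big)$ and $\mathbf S\colon(\x,\bm u)\mapsto(-\bm L^*\bm u,\bm L\x)$, the summand $\mathbf S$ is skew-adjoint, hence monotone, with $\|\mathbf S\|=\|\bm L\|\le\sqrt\ell$ — the last bound following from $\|\bm L\x\|^2\le\sum_j\big(\sum_i\|L_{i,j}\|\,\|x_i\|\big)^2\le\ell\,\|\x\|^2$ upon using $\|x_i\|\le\|\x\|$ for every $i$ — while $\mathbf B_0$ is monotone (as $\bm B$ and each $N_j^{-1}$ are) and $\max\{\tilde\beta,\nu_1,\dots,\nu_J\}$-Lipschitzian. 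Hence $\mathbf B$ is monotone and $\beta$-Lipschitzian. Since $D_j$ is $\delta_j$-strongly monotone, $D_j^{-1}$ is cocoercive, so the block-diagonal $\mathbf C$ is $\zeta^{-1}$-cocoercive with $\zeta^{-1}$ the minimum of the component constants; consequently \eqref{eq:gammaMulti} coincides with the admissible range \eqref{eq:gamma} for this instance.

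Next I would identify the zeros. Expanding $0\in\mathbf A\bm p+\mathbf B\bm p+\mathbf C\bm p+N_{\bm V}\bm p$ blockwise, the primal block returns $-\sum_j L_{i,j}^*u_j\in A_ix_i+B_i\x+C_ix_i+N_{V_i}x_i$, while the dual block returns $\sum_i L_{i,j}x_i\in(M_j^{-1}+N_j^{-1}+D_j^{-1})u_j=(M_j\infconv N_j\infconv D_j)^{-1}u_j$; together these are precisely \eqref{eq:multipri}--\eqref{eq:multidual}, so $\zer(\mathbf A+\mathbf B+\mathbf C+N_{\bm V})$ is in bijection with $\widetilde{\bm Z}$ and is therefore nonempty. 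Finally, because $J_{\gamma\mathbf A}$, $P_{\bm V}$, $\mathbf B$, and $\mathbf C$ all act coordinatewise, Algorithm~\ref{algo:MTC} decouples into the primal and dual loops of \eqref{eq:algoMTCMulti}; on each dual block $\bm V$ is the whole space $\G_j$, so $P_{\bm V}$ restricts to the identity and the associated $V^{\bot}$-variable vanishes, which explains the absence of a dual $y$-update. With the initializations matched through $w_{-1}=\mathbf B x_{-1}$, Theorem~\ref{teo:MTC} yields $x_n=(\x_n^1,\x_n^2)\weak\overline{x}=(\overline{\x}^1,\overline{\x}^2)$, whence $\x_n^1\weak\overline{\x}^1$ solves \eqref{eq:multipri} and $\x_n^2\weak\overline{\x}^2$ solves \eqref{eq:multidual}.

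The only genuinely delicate step is the blockwise zero-set correspondence: one must carefully track the signs in the skew coupling $\mathbf S$ and invert the parallel sum through the dual variable (using $(M\infconv N\infconv D)^{-1}=M^{-1}+N^{-1}+D^{-1}$) so as to reproduce \eqref{eq:multipri}--\eqref{eq:multidual} exactly, and one must verify that the norm bound $\|\bm L\|\le\sqrt\ell$ and the cocoercivity constant of $\mathbf C$ assemble into the stated $\beta$ and $\zeta$ of \eqref{eq:gammaMulti}. Everything else reduces to the routine check that the product-space resolvent and projection split into the per-index operations written in \eqref{eq:algoMTCMulti}.
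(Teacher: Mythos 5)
Your proposal is correct and follows essentially the same route as the paper: reformulate Problem~\ref{pro:multi} as an instance of Problem~\ref{prob:main} on the product space $\bm\H\oplus\bm\G$ with a block-diagonal maximally monotone part, a monotone Lipschitzian part containing the skew coupling $\bm L$, a block-diagonal cocoercive part, and the subspace $\prod_i V_i\oplus\bm\G$, then check that Algorithm~\ref{algo:MTC} decouples into \eqref{eq:algoMTCMulti} and invoke Theorem~\ref{teo:MTC}; you merely supply more detail where the paper cites \cite{CombPes12,Vu13} for the Lipschitz and cocoercivity constants. One small bookkeeping point: with your convention $\mathbf S\colon(\x,\bm u)\mapsto(-\bm L^*\bm u,\bm L\x)$ the blockwise expansion of the zero set actually yields $+\sum_j L_{i,j}^*u_j\in A_ix_i+\cdots$ and $-\sum_iL_{i,j}x_i\in(M_j^{-1}+N_j^{-1}+D_j^{-1})u_j$, so the dual limit is the negative of a solution of \eqref{eq:multidual} (equivalently, relabel $u\mapsto-u$); this sign ambiguity is already present in the paper, whose displayed $\bm B$ uses the opposite convention from its own recurrence \eqref{eq:algoMTCMulti}, and does not affect the substance of the argument.
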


\begin{proof}
	Set $\mathbb{H}=\HH\oplus \bm{\G} $ and define the operators
	\begin{equation}
		\displaystyle
		\begin{cases}
			\bm{A} \colon  \mathbb{H} \to 2^{\mathbb{H}} \colon (\x,\bm{u})\mapsto  A_1x_1\times \ldots A_I x_I \times M_1^{-1}u_1 \times \ldots M_J^{-1}u_J \\
			\bm{B}_1 \colon  \HH \to \HH\colon \x \mapsto (B_1\x,\ldots,B_I\x)\\
			\bm{B}_2 \colon  \bm{\G} \to \bm{\G} \colon \bm{u} \mapsto (N^{-1}_1 u_1,\ldots,N^{-1}_Ju_J)\\
			\bm{L} \colon  \HH \to \HH \colon \x \mapsto (\sum_{i\in \mathcal{I}}L_{i,1}x_i,\ldots,\sum_{i\in \mathcal{I}}L_{i,J}x_i)\\
			\bm{B} \colon  \mathbb{H} \to \mathbb{H} \colon  (\x,\bm{u}) \mapsto  (\bm{B}_1\bm{x}+\bm{L}^*\bm{u})\times (\bm{B}_2\bm{u}-\bm{L}\x)\\
			\bm{C}\colon   \mathbb{H} \to \mathbb{H} \colon (\bm{x},\bm{u}) \mapsto (C_1 \x,\ldots,C_I \x,D_1^{-1}u_1,\ldots,D_J^{-1}u_m)\\
			\bm{V} = V_1\times \ldots \times V_I \times \G_1 \ldots \times \G_J
		\end{cases}
	\end{equation}
	Note that 
	\begin{equation*}
		N_{\bm{V}} = N_{V_1}\times \ldots \times N_{V_I} \times \G_1\times \ldots \times \G_J
	\end{equation*}
	Hence, the primal-dual problem associated to \eqref{eq:multipri} and \eqref{eq:multidual} can be written as
	\begin{equation}
		\text{ find } \bm{z}=(\x,\bm{u}) \in \mathbb{H} 
		\text{ such that } \  0 \in \bm{A}\bm{z} + \bm{B}\bm{z}  +\bm{C}\bm{z} +N_{\bm{V}}\bm{z} .
	\end{equation}
	Additionally, it follows from \cite[Proposition~23.18 \& Proposition~29.3]{bauschkebook2017} that
	\begin{align*}
		&J_{\gamma \bm{A}}(\bm{x},\bm{u}) =(J_{\gamma A_1}x_1,\ldots,J_{\gamma A_I}x_I,J_{\gamma M^{-1}_1}u_1,\ldots ,J_{\gamma M^{-1}_J}u_J),\\
		&P_{\bm{V}}(\bm{x},\bm{u}) = (P_{V_1}x_1,\ldots,P_{V_I}x_I,u_1,\ldots,u_J). 
	\end{align*}
	Moreover,
	\begin{equation*}
		\bm{L}^* \colon  \bm{\G} \to \bm{\H} \colon \bm{u} \mapsto\left(\sum_{j \in \mathcal{J}}L_{1,j}^*u_j,\ldots,\sum_{j \in \mathcal{J}}L_{I,j}^*u_j\right)
	\end{equation*}
	and
	\begin{equation}
		(\forall \x \in \HH) \quad \|\bm{L}\x\|^2= \sum_{k\in K} \left(\sum_{i \in I}\|L_{i,k}x_i\|\right)^2\leq  \sum_{k\in K}\left(\sum_{i \in I}\|L_{i,k}\|\right)^2\|\x\|^2 = \ell\|\x\|^2.
	\end{equation}
	Hence, $\|L\| \leq \sqrt{\ell}$. Now, define 
	\begin{equation}
		(\forall n \in \N) \quad \begin{cases}
			\bm{z}_n = (x_n^{1,1},\ldots,x_n^{1,I},x_n^{2,1},\ldots,x_n^{1,j})\\
			\bm{w}_n = (w_n^{1,1},\ldots,w_n^{1,I},w_n^{2,1},\ldots,w_n^{1,j})\\
			\bm{u}_n = (u_n^{1,1},\ldots,u_n^{1,I},u_n^{2,1},\ldots,u_n^{1,j})\\
			\bm{p}_n = (p_n^{1,1},\ldots,p_n^{1,I},0,\ldots,0)\\
			\bm{y}_n = (y_n^{1,1},\ldots,y_n^{1,I},0,\ldots,0)
		\end{cases}
	\end{equation}
	Therefore, it follows from \eqref{eq:algoMTCMulti} that
	\begin{equation}
		\label{eq:algoMTCMULTI}
		(\forall n \in \N)\quad \left\lfloor
		\begin{aligned}
			&\bm{w}_n = \bm{B} \bm{x}_n\\
			&\bm{u}_n = \bm{x}_{n}+\gamma 
			\bm{y}_{n}-\gamma P_{\bm{V}} (2 \bm{w}_n - \bm{w}_{n-1}+\bm{C}\bm{x}_n)\\
			&\bm{p}_{n}=J_{{\gamma}\bm{A}} \bm{u}_n\\
			&\bm{x}_{n+1}=P_{\bm{V}}\bm{p}_n\\
			&\bm{y}_{n+1}=\bm{y}_{n}-\frac{\bm{p}_n-\bm{x}_{n+1}}{\gamma}.
		\end{aligned}
		\right.
	\end{equation}
	Since $\bm{A}$ is maximally monotone \cite[Proposition~20.23]{bauschkebook2017},  $\bm{B}$ is monotone and $\beta$-Lipschitzian
	\cite[eq.(3.11)]{CombPes12}, and $\bm{C}$ is $\zeta^{-1}$-cocoercive \cite[eq.(3.12)]{Vu13}, the result follows from Theorem~\ref{teo:MTC}.
\end{proof}
The next result is as a consequence of Theorem~\ref{teo:SDR}.
\begin{cor}\label{cor:SDRMulti}
	In the context of 
	Problem~\ref{pro:multi}, for every $i \in \mathcal{I}$, 
	let $(x_{-1}^{1,i},x_{0}^{1,i},y_{0}^{1,i})\in V_i\times V_i\times V_i^{\bot}$, for every $j \in \mathcal{J}$, 
	let $(x_{-1}^{2,j},x_{0}^{2,j})\in {\G}_j \times {\G}_j$, and define $\bm{x}_{-1}^1 = (x_{-1}^{1,i})_{i \in \mathcal{I} }$,  $w_{-1}^{1,i} = B_i \x_{-1}^1-\sum_{j \in \mathcal{J}}L^*_{i,j}x_{-1}^{2,j}$, and $w_{-1}^{2,j} = 
	N_j^{-1} x_{-1}^{2,j}+\sum_{i \in \mathcal{I}}L_{i,j}x_{-1}^{1,i}$. Moreover, set $\ell = \sum_{k\in K}\left(\sum_{i \in I}\|L_{i,k}\|\right)^2$, $\beta = \max\{\widetilde{\beta},\nu_1,\ldots,\nu_J\} + \sqrt{\ell}$, $\zeta = 
	\max\{\zeta_1,\ldots,\zeta_I, \delta_1,\ldots,\delta_J\}$,  and  let $\gamma \in \RPP$ be such that
	\begin{equation}\label{eq:gammaSDRMulti}
		\frac{2}{3}-(2\beta+\zeta)\gamma-\beta^2\zeta\gamma^3 > 0.
	\end{equation}
	Consider the sequences $(\x_n^1)_{n \in \N}=(x_n^{1,1},\ldots,x_n^{1,I})_{n \in \N}$ and $(\x_n^2)_{n \in \N}=(x_n^{2,1},\ldots,x_n^{2,J})_{n \in \N}$ defined recursively by
	\begin{equation}
		\label{eq:algoSDRMulti}
		(\forall n \in \N)\quad \left\lfloor
		\begin{aligned}
			&\text{for } i=1,\ldots,I\\
			&\left\lfloor
			\begin{aligned}
				&w_n^{1,i} = B_i \x_n^1-\sum_{j \in \mathcal{J}}L^*_{i,j}x_n^{2,j}\\
				&u_n^{1,i} = x_n^{1,i} +\gamma y_n^{1,i} - \gamma P_{V_i} (w_n^{1,i}+C_ix_n^{1,i})\\
				&p_n^{1,i} = J_{\gamma A_i} u_n^{1,i}\\
				&x_{n+1}^{1,i} = P_{V_i} (p_n^{1,i}-\gamma (w_n^{1,i}-w_{n-1}^{1,i}))\\
				&y_{n+1}^{1,i}=y_{n}^{1,i}-\frac{p_n^{1,i}-x_{n+1}^{1,i}}{\gamma}.
			\end{aligned}\right.\\
			&\text{for } j=1,\ldots,J\\
			&\left\lfloor
			\begin{aligned}
				&w_n^{2,j} = N_j^{-1} x_n^{2,j}+\sum_{i \in \mathcal{I}}L_{i,j}x_n^{1,i}\\
				&u_n^{2,j} = x_n^{2,j}  - \gamma  (w_n^{2,j}+D_j^{-1}x_n^{2,j})\\
				&x_{n+1}^{2,j} = J_{\gamma M^{-1}_j} u_n^{2,j}-\gamma(w_n^{2,j}-w_{n-1}^{2,j}).
			\end{aligned}\right.
		\end{aligned}
		\right.
	\end{equation}
	Therefore, $(\bm{x}_n^1)_{n \in \N}$ and $(\bm{x}_n^2)_{n \in \N}$ converges weakly to a solution to \eqref{eq:multipri} and \eqref{eq:multidual}, respectively.
\end{cor}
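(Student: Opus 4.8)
The plan is to mirror the proof of Corollary~\ref{cor:MTCMulti} almost verbatim, replacing the reduction to Algorithm~\ref{algo:MTC} by a reduction to Algorithm~\ref{algo:SDR} and invoking Theorem~\ref{teo:SDR} in place of Theorem~\ref{teo:MTC}. First I would reuse the identical product-space construction: on $\mathbb{H}=\HH\oplus\bm{\G}$ define the same operators $\bm{A}$, $\bm{B}$, $\bm{C}$ and the same subspace $\bm{V}$, so that the primal-dual system \eqref{eq:multipri}--\eqref{eq:multidual} is again equivalent to the monotone inclusion $0\in\bm{A}\bm{z}+\bm{B}\bm{z}+\bm{C}\bm{z}+N_{\bm{V}}\bm{z}$. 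The structural facts established there carry over without change: $\bm{A}$ is maximally monotone, $\bm{B}$ is monotone and $\beta$-Lipschitzian with $\beta=\max\{\widetilde\beta,\nu_1,\dots,\nu_J\}+\sqrt{\ell}$, $\bm{C}$ is $\zeta^{-1}$-cocoercive, and the resolvent and projection split blockwise, with $J_{\gamma\bm{A}}$ acting coordinatewise and $P_{\bm{V}}$ projecting the primal blocks onto $V_i$ while acting as the identity on the dual blocks $\G_j$.

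Next I would introduce the aggregated sequences $\bm{z}_n$, $\bm{w}_n$, $\bm{u}_n$, $\bm{p}_n$, $\bm{y}_n$ exactly as in the previous proof, and verify that \eqref{eq:algoSDRMulti} coincides with Algorithm~\ref{algo:SDR} applied to these product-space operators. The resolvent step matches because the inner arguments $x_n^{1,i}+\gamma y_n^{1,i}-\gamma P_{V_i}(w_n^{1,i}+C_ix_n^{1,i})$ and $x_n^{2,j}-\gamma(w_n^{2,j}+D_j^{-1}x_n^{2,j})$ assemble into $\bm{x}_n+\gamma\bm{y}_n-\gamma P_{\bm{V}}(\bm{w}_n+\bm{C}\bm{x}_n)$, using that $P_{\bm{V}}$ is the identity on the dual blocks.

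The one point that warrants explicit care is the $x_{n+1}$-update $\bm{x}_{n+1}=P_{\bm{V}}(\bm{p}_n-\gamma(\bm{w}_n-\bm{w}_{n-1}))$. On each primal block this yields $P_{V_i}(p_n^{1,i}-\gamma(w_n^{1,i}-w_{n-1}^{1,i}))$; on each dual block, since $P_{\bm{V}}$ restricts to the identity, the reflection term survives unprojected and produces $J_{\gamma M_j^{-1}}u_n^{2,j}-\gamma(w_n^{2,j}-w_{n-1}^{2,j})$. This is precisely how the two families of updates are written in \eqref{eq:algoSDRMulti}, so the reduction is exact. I would then observe that the step-size condition \eqref{eq:gammaSDRMulti} is literally condition \eqref{eq:gammaSDR} for the aggregated constants $\beta$ and $\zeta$.

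Finally, Theorem~\ref{teo:SDR} applies and guarantees the weak convergence of $(\bm{z}_n)_{n\in\N}$ to a point with the stated primal-dual properties, whence, by the weak continuity of the block projections, $(\bm{x}_n^1)_{n\in\N}$ and $(\bm{x}_n^2)_{n\in\N}$ converge weakly to solutions of \eqref{eq:multipri} and \eqref{eq:multidual}, respectively. The main obstacle is the bookkeeping in the $x_{n+1}$-update just noted, where the asymmetric action of $P_{\bm{V}}$ on primal versus dual blocks must be tracked carefully; everything else is an unchanged transcription of the proof of Corollary~\ref{cor:MTCMulti}.
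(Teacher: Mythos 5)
Your proposal is correct and follows exactly the route the paper intends: the paper's own proof of this corollary is the single line ``Analogous to the proof of Corollary~\ref{cor:MTCMulti},'' and you have simply spelled out that analogy --- same product-space operators $\bm{A},\bm{B},\bm{C}$ and subspace $\bm{V}$, reduction of \eqref{eq:algoSDRMulti} to Algorithm~\ref{algo:SDR}, and invocation of Theorem~\ref{teo:SDR} with the aggregated constants $\beta$ and $\zeta$. Your explicit check that $P_{\bm{V}}$ acts as the identity on the dual blocks, so that the reflection term $-\gamma(w_n^{2,j}-w_{n-1}^{2,j})$ survives unprojected in the $\G_j$-updates, is precisely the one piece of bookkeeping the paper leaves implicit, and you handle it correctly.
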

\begin{proof}
	Analogous to the proof of Corollary~\ref{cor:MTCMulti}.
\end{proof}
\begin{rem}
	\begin{enumerate}
		\item Recurrences generated by 
		\eqref{eq:algoMTCMulti} and \eqref{eq:algoSDRMulti} have the 
		same advantages mentioned in Remark~\ref{rem:1} in 
		comparison 
		to the 
		method proposed in 
		\cite[Section~4]{BricenoRoldanYuchaoChen}.
	\end{enumerate}	
\end{rem}
Next, we proceed to apply our methods to the example 
of convex optimization problems available in 
\cite{BricenoRoldanYuchaoChen}.
\begin{ejem}\label{ex:1}\cite[Example 
	4.4]{BricenoRoldanYuchaoChen}
	Consider the convex minimization problem
	\begin{equation}\label{eq:probopti1}
		\min_{\sx\in\sV}\big(\mathsf{f}(\sx)+\mathsf{h}(\sx)+\sg(\sL\sx)\big),
	\end{equation}
	where $\mathsf{f}\in\Gamma_0(\sH)$, 
	$\mathsf{g}\in\Gamma_0(\sG)$, and 
	$\mathsf{h}\colon\sH\to\R$ is 
	convex differentiable with $\rho^{-1}$-Lipschitzian gradient. 
	Note 
	that, under the qualification condition 
	\begin{equation}
		\label{eq:condqual}
		0 \in\sri\Big(\mathsf{L}(\sV\cap\dom 
		\mathsf{f})-\dom\mathsf{g}\Big),
	\end{equation}
	by \cite[Theorem~16.47 \& Example~16.13]{bauschkebook2017}, 
	the problem 
	in 
	\eqref{eq:probopti1} is equivalent to
	\begin{equation}
		\text{ find } \sx \in \sH \text{ such that } 0 \in 
		\partial 
		\mathsf{f}
		(\sx) + \sL^*\partial \sg (\sL \sx) + \nabla \mathsf{h}
		(\sx)+N_{\sV}{\sx}.	
	\end{equation}
	Therefore, by defining $A=\partial \mathsf{f}$, $M=  
	\partial 
	\mathsf{g}$, $C=\nabla 
	\mathsf{h}$, and $B=D^{-1}=N^{-1}=0$, the optimization 
	problem 
	in 
	\eqref{eq:probopti1} is equivalent 
	to the primal inclusion in \eqref{eq:multipri} when $I=1$ and $J=1$. Hence, the 
	recurrence in
	\eqref{eq:algoMTCMulti}, can be applied to this setting, deriving 
	the 
	following algorithm:
	\begin{equation}\label{eq:algoMTOP1}
		(\forall n \in \N)\quad \left\lfloor
		\begin{aligned}
			&\sw_n^1 = \sL^* \su_n\\
			&\sw_n^2 = - \sL \sx_n\\
			&\sp_{n}=\prox_{\gamma f}\big(\sx_{n}+\gamma 
			\sy_{n}-\gamma P_{\sV} (2 \sw^1_n - 
			\sw^1_{n-1}+\nabla 	\mathsf{h} \sx_n)\big)\\
			&\su_{n+1}=\prox_{\gamma g^*}\big(\su_{n}-\gamma(2 
			\sw^2_n 
			- \sw^2_{n-1})\big)\\
			&\sx_{n+1}=P_{\sV}\sp_n\\
			&\sy_{n+1}=\sy_{n}-\frac{\sp_n-\sx_{n+1}}{\gamma},
		\end{aligned}
		\right.
	\end{equation}
	where the weak convergence of $(\sx_n)_{n \in \N}$ to a 
	solution 
	to 
	\eqref{eq:probopti1} is guaranteed for $\gamma \in ]0, 
	2/(4\|L\|+\rho)[$.
	Additionally, if we apply the 
	recurrence in
	\eqref{eq:algoSDRMulti} on this setting, we obtain the following 
	algorithm
	\begin{equation}\label{eq:algoSDROP1}
		(\forall n \in \N)\quad \left\lfloor
		\begin{aligned}
			&\sw_n^1 = \sL^* \su_n\\
			&\sw_n^2 = - \sL \sx_n\\
			&\sp_{n}=\prox_{\gamma f}\big(\sx_{n}+\gamma 
			\sy_{n}-\gamma P_{\sV} ( \sw^1_n+\nabla 	
			\mathsf{h} 
			\sx_n)\big)\\
			&\su_{n+1}=\prox_{\gamma g^*}\big(\su_{n}-\gamma
			\sw^2_n \big)-\gamma(\sw^2_n 
			- \sw^2_{n-1})\\
			&\sx_{n+1}=P_{\sV}(\sp_n-\gamma(\sw^1_n - 
			\sw^1_{n-1}))\\
			&\sy_{n+1}=\sy_{n}-\frac{\sp_n-\sx_{n+1}}{\gamma},
		\end{aligned}
		\right.
	\end{equation}
	where $(\sx_n)_{n \in \N}$ converges weakly to a 
	solution 
	to \eqref{eq:probopti1} if $\gamma$ satisfies
	\begin{equation}
		\frac{2}{3}-(2\|L\|+\rho)\gamma-\|L\|^2\rho\gamma^3 > 0.
	\end{equation}
\end{ejem}
\subsection{\bf Inclusion Involving a Sum of Maximally Monotone Operators}	
In this section we study the following problem involving a sum of maximally monotone operators.
\begin{pro}\label{prob:sumofMM}
	Let $K \in \N$, for every $k \in \{1,\ldots,K\}$, let $A_k: \H \to 2^\H$ be a maximally monotone operator, let
	$B : \H \to \H$ be a $\beta$-Lipschitzian operator for some 
	$\beta >0$, and let $C: \H \to \H$ be a $\zeta^{-1}$-cocoercive 
	operator 
	for some 
	$\zeta >0$. 
	The 
	problem is to 
	\begin{equation}\label{eq:sumofMM}
		\text{find} \quad x \in \H \quad \text{such that} \quad 0 
		\in 
		\sum_{k=1}^K A_k x+Bx+Cx,
	\end{equation}
	under the assumption that its solution set not empty.
\end{pro}
This problems arise for example in convex variational inverse problems \cite{CombPesquet2008,Raguet2013}, signal denoising \cite{CombePesquet2007,ROF1992}, economics \cite{Jofre2007equilibrium},  image restoration \cite{chambolle1997,Figueiredo2007}, among others.
In the case when $C=0$ this problem can be solved by the methods proposed in \cite{Briceno2015JOTA,CombPes12} and when $B=0$ by the method proposed in \cite{briceno2015Optim,Raguet2013,Vu13}. Problem~\ref{prob:sumofMM} can be solved by the method proposed in \cite[Section~5]{BricenoDavis2018}, however, this method involves restrictive step-sizes, an additional activation of the Lipschitzian operator in each iteration, and dual variables which could be computationally expensive in large dimensions. In this section we provide two methods for solving Problem~\ref{prob:sumofMM} that exploit the structure of Problem~\ref{prob:sumofMM} without incorporating dual variables. The first method is presented in the following corollary  which is obtained as a consequence of Theorem~\ref{teo:MTC}.
\begin{cor}\label{cor:MTsMM}
	In the context of Problem~\ref{prob:sumofMM}, let $\gamma\in 
	]0,2/(4\beta+\zeta)[$, let $\{\omega_1,\ldots,\omega_K\} \subset [0,1]$ be such that $\sum_{k=1}^K\omega_k=1$, let $(x_{-1},x_{0})\in \H^2$, set
	$w_{-1} = Bx_{-1}$, and let $(y_0^k)_{k=1}^K \in \H^K$ be such that $\sum_{k=1}^K\omega_k y_k=0$. Let
	$(x_n)_{n 
		\in 
		\N}$ be the sequence generated  the following recurrence
	\begin{equation}
		\label{eq:algoMTCsMM}
		(\forall n \in \N)\quad \left\lfloor
		\begin{aligned}
			&w_n = B x_n\\
			&q_n = x_n-\gamma(2w_n-w_{n-1}+Cx_n)\\
			&\text{for } k=1,\ldots,K\\
			&\left\lfloor
			\begin{aligned}
				&p_{n}^k=J_{{\gamma}A_k/\omega_k}  (\gamma y_n^k + q_n)
			\end{aligned}
			\right.\\
			&x_{n+1} = \sum_{k=1}^K\omega_k p_n^k\\
			&\text{for } k=1,\ldots,K\\
			&\left\lfloor
			\begin{aligned}
				&y_{n+1}^k=y_{n}^k-\frac{p_n^k-x_{n+1}}{\gamma}.
			\end{aligned}
			\right.
		\end{aligned}
		\right.
	\end{equation}
	Then,  there exist
	$\overline{x}\in \zer(\sum_{k=1}^K A_k+B+C)$
	such that $x_{n}\weak \overline{x}$.
\end{cor}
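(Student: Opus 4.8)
The plan is to realize Corollary~\ref{cor:MTsMM} as a direct application of Theorem~\ref{teo:MTC} in a suitable product space with a well-chosen vector subspace $V$, which is precisely the standard Spingarn/partial-inverse trick for splitting a sum of operators. First I would set $\HH = \H^K$ with the weighted scalar product $\scal{\x}{\y}_{\bm\omega} = \sum_{k=1}^K \omega_k \scal{x_k}{y_k}$, and take $V$ to be the diagonal subspace $\bm\Delta = \menge{(x,\ldots,x)}{x \in \H}$. Under this inner product, the projection onto $\bm\Delta$ is the weighted average $P_{\bm\Delta}(\x) = (\bar x,\ldots,\bar x)$ with $\bar x = \sum_{k=1}^K \omega_k x_k$, and $V^\bot$ is exactly the set of tuples $(y_k)_{k=1}^K$ with $\sum_{k=1}^K \omega_k y_k = 0$ — which explains the initialization conditions $\sum_k \omega_k y_0^k = 0$ in the statement.

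Next I would define the product operators $\bm A \colon \x \mapsto \bigtimes_{k=1}^K (A_k/\omega_k) x_k$, $\bm B \colon \x \mapsto (Bx_1,\ldots,Bx_K)$, and $\bm C \colon \x \mapsto (Cx_1,\ldots,Cx_K)$. The key computations to verify are: (i) $\bm A$ is maximally monotone with $J_{\gamma \bm A}(\x) = (J_{\gamma A_k/\omega_k} x_k)_{k=1}^K$; (ii) relative to the weighted inner product, $\bm B$ is $\beta$-Lipschitzian and $\bm C$ is $\zeta^{-1}$-cocoercive, inheriting the constants of $B$ and $C$ componentwise; and (iii) on the diagonal, a point $\x = (x,\ldots,x) \in \bm\Delta$ solves $0 \in \bm A\x + \bm B\x + \bm C\x + N_{\bm\Delta}\x$ if and only if $x$ solves \eqref{eq:sumofMM}. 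Claim (iii) is the crux: restricting to $\bm\Delta$ forces all components equal to a common $x$, while the normal-cone term $N_{\bm\Delta}$ absorbs the subgradient-type residuals, and $P_{\bm\Delta}$ applied to the inclusion collapses $\sum_k \omega_k (A_k/\omega_k) x = \sum_k A_k x$; the $V^\bot$-component of the dual variable $\overline{\bm y}$ encodes the splitting $\sum_k \omega_k y^k = 0$ with each $\omega_k y^k$ lying in a translate of $A_k x$.

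Then I would run Algorithm~\ref{algo:MTC} for the data $(\bm A, \bm B, \bm C, \bm\Delta)$ and show that its iteration reduces exactly to \eqref{eq:algoMTCsMM}. Here $P_{\bm\Delta}(2\bm w_n - \bm w_{n-1} + \bm C\x_n)$ becomes the scalar quantity $2w_n - w_{n-1} + Cx_n$ broadcast across components (using that $x_n$ is on the diagonal and $B,C$ act componentwise), the resolvent step splits into the $K$ parallel computations $p_n^k = J_{\gamma A_k/\omega_k}(\gamma y_n^k + q_n)$, the primal update $\x_{n+1} = P_{\bm\Delta}\bm p_n$ becomes $x_{n+1} = \sum_k \omega_k p_n^k$, and the dual update becomes $y_{n+1}^k = y_n^k - (p_n^k - x_{n+1})/\gamma$. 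The step-size condition $\gamma \in ]0,2/(4\beta+\zeta)[$ transfers verbatim since the Lipschitz and cocoercivity constants are preserved under the product construction.

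The main obstacle is the bookkeeping around the weighted inner product: one must be careful that $P_{\bm\Delta}$, the cocoercivity constant of $\bm C$, and the adjoint structure are all computed with respect to $\scal{\cdot}{\cdot}_{\bm\omega}$ rather than the standard product inner product, since otherwise the weights $\omega_k$ would appear in the wrong places and the reduction to \eqref{eq:algoMTCsMM} would not be clean. Once the weighted geometry is fixed, claim (iii) and the algorithmic reduction are routine, and the weak convergence $x_n = \bar p_{n-1} \weak \overline{x}$ follows from the conclusion $\bm x_n \weak \overline{\bm x} = (\overline x,\ldots,\overline x)$ of Theorem~\ref{teo:MTC} together with the weak continuity of $P_{\bm\Delta}$; note that the dual sequence $\overline{\bm y}$ need not be reported in the statement because only the diagonal primal limit is of interest.
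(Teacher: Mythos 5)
Your proposal is correct and follows essentially the same route as the paper: the weighted product space $(\H^K,\scal{\cdot}{\cdot}_{\omega})$, the diagonal subspace with weighted-average projection, the componentwise operators $\bm A\colon \x\mapsto \bigtimes_k (A_k/\omega_k)x_k$, $\bm B$, $\bm C$, the zero-set equivalence via the normal cone to the diagonal, and the reduction of Algorithm~\ref{algo:MTC} to \eqref{eq:algoMTCsMM} before invoking Theorem~\ref{teo:MTC}. The only difference is cosmetic: the paper delegates the verification of maximal monotonicity, the Lipschitz/cocoercivity constants, and the projection and normal-cone formulas to \cite[Proposition~4.1]{Briceno2015JOTA} and \cite[Proposition~6.2]{briceno2015Optim} rather than recomputing them.
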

\begin{proof}
	Set $\HH = (\H^K,\scal{\cdot}{\cdot}_\omega)$, where, for every $\x= (x_k)_{k=1}^K\in \H^K$ and $\y= (y_k)_{k=1}^K \in \H^K$, $\scal{\x}{\y}_\omega = \sum_{k=1}^{K}\omega_k\scal{x_k}{y_k}$. Define the following operators
	\begin{align*}
		&\bm{A} \colon \HH \to 2^{\HH} \colon \x=(x_k)_{k=1}^K \mapsto \frac{1}{\omega_1}A_1 x_1 \times \frac{1}{\omega_2}A_2 x_2 \times \ldots\times \frac{1}{\omega_k}A_K x_K\\
		&\bm{B} \colon \HH \to \HH \colon \x=(x_k)_{k=1}^K \mapsto (B x_1,  \ldots, B x_K)\\
		&\bm{C} \colon \HH \to \HH \colon \bm{x}=(x_k)_{k=1}^K \mapsto (C x_1, \ldots, C x_K)\\
		&\bm{J} \colon \H \to \HH \colon x \mapsto (x, \ldots, x).
	\end{align*}
	Therefore, it follows from \cite[Proposition~4.1]{Briceno2015JOTA} that $\bm{A}$ is maximally monotone, $\bm{B}$ is $\beta$-Lipschitizian,
	and from \cite[Proposition~6.2]{briceno2015Optim} that $\bm{C}$ is $\zeta^{-1}$-cocoercive.
	Now, define
	\begin{align*}
		V = \menge{\x=(x_k)_{k=1}^K \in \HH}{x_1=x_2=\ldots=x_K}.
	\end{align*}
	Hence, it follows from \cite[Proposition~4.1]{Briceno2015JOTA} that $V$ is a closed vector subspace of $\HH$ and that, for every $\x=(x_k)_{k=1}^K \in \HH$, $P_V \x = \bm{J}(\sum_{k=1}^K\omega_k x_k)$ and 
	\begin{equation}
		N_V(\x) = \begin{cases}
			\menge{\y=(y_k)_{k=1}^K \in \HH}{\sum_{k=1}^K\omega_k y_k=0}, \text{ if } \x \in V\\
			\varnothing, \text{ otherwise}.
		\end{cases}
	\end{equation} 
	Then, for every $x \in \H$, we have
	\begin{align}\displaystyle
		0 \in \sum_{k=1}^K A_kx+Bx+Cx &\Leftrightarrow (\exists  \y = (y_k)_{k=1}^K \in \bm{A} (\bm{J}x)) \quad  0 = \sum_{k=1}^N \omega_k (y_k + Bx+Cx)\nonumber\\
		&\Leftrightarrow  (\exists  \y = (y_k)_{k=1}^K \in \bm{A} (\bm{J}x)) \quad -\left(\y+\bm{J}((B+C)x)\right) \in N_V(\bm{J}(x))\nonumber\\
		&\Leftrightarrow 0 \in (\bm{A}+\bm{B}+\bm{C}+N_V)(\bm{J}(x))\nonumber\\
		&\Leftrightarrow \bm{J}(x) \in \zer(\bm{A}+\bm{B}+\bm{C}+N_V).\label{eq:equivsol}
	\end{align}
	Now, define, for every $n \in \N$, $\x_n=\bm{J}(x_n)$, $\bm{w}_n=\bm{J}(w_n)$, $\bm{u}_n = (x_n^k+\gamma y_n^k - \gamma q_n)_{k=1}^K$, $\bm{p}_n = (p_n^k)_{k=1}^K$, and $\bm{y}_n = (y_n^k)_{k=1}^K $. Therefore, it follows from \eqref{eq:algoMTCsMM} that
	\begin{equation}
		\label{eq:algoMTCMulti2}
		(\forall n \in \N)\quad \left\lfloor
		\begin{aligned}
			&\bm{w}_n = \bm{B} \x_n\\
			&\bm{u}_n = \x_{n}+\gamma 
			\y_{n}-\gamma P_V (2 \bm{w}_n - \bm{w}_{n-1}+\bm{C}\x_n)\\
			&\bm{p}_{n}=J_{{\gamma}\bm{A}} \bm{u}_n\\
			&\x_{n+1}=P_V\bm{p}_n\\
			&\y_{n+1}=\y_{n}-\frac{\bm{p}_n-\x_{n+1}}{\gamma}.
		\end{aligned}
		\right.
	\end{equation}
	In view of Theorem~\ref{teo:MTC}, we conclude that $\x_n\weak \x \in \zer(\bm{A}+\bm{B}+\bm{C}+N_V)$. Moreover, since $\x_n = \bm{J}(x_n)$, $x_n\weak x \in \H$ and $\bm{J}(x) = \x \in \zer(\bm{A}+\bm{B}+\bm{C}+N_V)$. The result follows from \eqref{eq:equivsol}.
\end{proof}
The next result is as a consequence of Theorem~\ref{teo:MTC}.
\begin{cor}
	In the context of Problem~\ref{prob:sumofMM}, let $\gamma>0$ be such that
	\begin{equation}
		\frac{2}{3}-(2\beta+\zeta)\gamma-\beta^2\zeta\gamma^3 > 0,
	\end{equation}
	let $\{\omega_1,\ldots,\omega_K\} \subset [0,1]$ be such that $\sum_{k=1}^K\omega_k=1$, let $(x_{-1},x_{0})\in \H^2$, set
	$w_{-1} = Bx_{-1}$, and let $(y_0^k)_{k=1}^K \in \H^K$ be such that $\sum_{k=1}^K\omega_k y_k=0$. Let
	$(x_n)_{n 
		\in 
		\N}$ be the sequence generated  the following recurrence
	\begin{equation}
		\label{eq:algoSDRsMM}
		(\forall n \in \N)\quad \left\lfloor
		\begin{aligned}
			&w_n = B x_n\\
			&q_n = x_n-\gamma(w_n+Cx_n)\\
			&\text{for } k=1,\ldots,K\\
			&\left\lfloor
			\begin{aligned}
				&p_{n}^k=J_{{\gamma}A_k/\omega_k}  (\gamma y_n^k + q_n)
			\end{aligned}
			\right.\\
			&x_{n+1} = \sum_{k=1}^K\omega_k p_n^k-\gamma(w_n-w_{n-1})\\
			&\text{for } k=1,\ldots,K\\
			&\left\lfloor
			\begin{aligned}
				&y_{n+1}^k=y_{n}^k-\frac{p_n^k-x_{n+1}}{\gamma}.
			\end{aligned}
			\right.
		\end{aligned}
		\right.
	\end{equation}
	Then,  there exist
	$\overline{x}\in \zer(\sum_{k=1}^K A_k+B+C)$
	such that $x_{n}\weak \overline{x}$.
\end{cor}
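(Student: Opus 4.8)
The plan is to mirror the proof of Corollary~\ref{cor:MTsMM}, merely swapping the reduction to Algorithm~\ref{algo:MTC} for a reduction to Algorithm~\ref{algo:SDR}; note that, despite the line preceding the statement, the recurrence \eqref{eq:algoSDRsMM} and the step-size inequality are those of the shadow--Douglas--Rachford method, so the result is in fact a consequence of Theorem~\ref{teo:SDR}. First I would reuse the product-space construction of Corollary~\ref{cor:MTsMM}: work in $\HH = (\H^K,\scal{\cdot}{\cdot}_\omega)$ with $\scal{\x}{\y}_\omega = \sum_{k=1}^K\omega_k\scal{x_k}{y_k}$, set $\bm{A}\colon\x\mapsto(\omega_k^{-1}A_kx_k)_k$, $\bm{B}\colon\x\mapsto(Bx_k)_k$, $\bm{C}\colon\x\mapsto(Cx_k)_k$, the diagonal embedding $\bm{J}\colon x\mapsto(x,\ldots,x)$, and the diagonal subspace $V=\{\x:x_1=\cdots=x_K\}$. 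As in Corollary~\ref{cor:MTsMM}, \cite[Proposition~4.1]{Briceno2015JOTA} and \cite[Proposition~6.2]{briceno2015Optim} give that $\bm{A}$ is maximally monotone, $\bm{B}$ is $\beta$-Lipschitzian and $\bm{C}$ is $\zeta^{-1}$-cocoercive, that $P_V\x=\bm{J}(\sum_k\omega_k x_k)$ and $V^\bot=\{\y:\sum_k\omega_k y_k=0\}$, and that $0\in\sum_k A_k x+Bx+Cx$ if and only if $\bm{J}(x)\in\zer(\bm{A}+\bm{B}+\bm{C}+N_V)$.

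Next I would verify that \eqref{eq:algoSDRsMM} is exactly Algorithm~\ref{algo:SDR} applied to these data. Writing $\x_n=\bm{J}(x_n)$, $\bm{w}_n=\bm{J}(w_n)$, $\bm{y}_n=(y_n^k)_k$ and $\bm{p}_n=(p_n^k)_k$, I would use $\bm{B}\x_n=\bm{J}(w_n)$, $\bm{C}\x_n=\bm{J}(Cx_n)$ and the fact that $P_V$ fixes diagonal vectors to identify the resolvent argument as the vector whose $k$-th entry is $q_n+\gamma y_n^k$, where $q_n=x_n-\gamma(w_n+Cx_n)$; since $J_{\gamma\bm{A}}$ acts componentwise as $J_{\gamma A_k/\omega_k}$, this reproduces $p_n^k=J_{\gamma A_k/\omega_k}(\gamma y_n^k+q_n)$. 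Because $\bm{w}_n-\bm{w}_{n-1}=\bm{J}(w_n-w_{n-1})$, the update $\x_{n+1}=P_V(\bm{p}_n-\gamma(\bm{w}_n-\bm{w}_{n-1}))=\bm{J}(\sum_k\omega_k p_n^k-\gamma(w_n-w_{n-1}))$ matches the $x_{n+1}$ line, and the $\bm{y}_{n+1}$ update reads componentwise as in \eqref{eq:algoSDRsMM}. The hypotheses $\x_0=\bm{J}(x_0)\in V$ and $\sum_k\omega_k y_0^k=0$, i.e. $\bm{y}_0\in V^\bot$, place the initial data where Algorithm~\ref{algo:SDR} requires.

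Finally, since $\beta$ and $\zeta$ are unchanged under the product construction, the assumed inequality on $\gamma$ is precisely condition \eqref{eq:gammaSDR}, so Theorem~\ref{teo:SDR} yields $\x_n\weak\overline{\x}\in\zer(\bm{A}+\bm{B}+\bm{C}+N_V)$. As each $\x_n\in V$ and $V$ is weakly closed, $\overline{\x}=\bm{J}(\overline{x})$ for some $\overline{x}\in\H$, and testing against $\bm{J}(a)$ transfers the convergence to $x_n\weak\overline{x}$; the solution equivalence then gives $\overline{x}\in\zer(\sum_k A_k+B+C)$. I expect no genuine obstacle here: the only delicate points are the bookkeeping that turns the SDR correction $-\gamma(w_n-w_{n-1})$ into its product-space image and the identification of the componentwise resolvent in the weighted inner product as $J_{\gamma A_k/\omega_k}$, both of which are already carried out in the proof of Corollary~\ref{cor:MTsMM}.
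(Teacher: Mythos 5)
Your proposal is correct and follows exactly the route the paper intends: its proof of this corollary is literally ``Analogous to the proof of Corollary~\ref{cor:MTsMM}'', and you have carried out that analogy faithfully by reducing \eqref{eq:algoSDRsMM} to Algorithm~\ref{algo:SDR} on the weighted product space with the diagonal subspace and invoking Theorem~\ref{teo:SDR}. You are also right that the sentence preceding the statement should refer to Theorem~\ref{teo:SDR} rather than Theorem~\ref{teo:MTC}; the step-size condition and the correction term $-\gamma(w_n-w_{n-1})$ make this unambiguous.
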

\begin{proof}
	Analogous to the proof of Corollary~\ref{cor:MTsMM}
\end{proof}
\section{Numerical experiments}\label{sec:numerical}
In \cite{BricenoRoldanYuchaoChen}, the authors
highlighted
the 
advantages of FPIHF of considering subspaces induced by kernels 
of 
linear 
operators allowing larger step-sizes than the
primal-dual 
algorithm (Condat-V\~u) \cite{Condat13,Vu13}, which is limited to 
small step-sizes. This implies 
that, in certain cases, FPIHF numerically outperforms the 
primal-dual 
algorithm. To compare 
the proposed methods with FPIHF and Condat-V\~u, 
we consider the following convex optimization problem proposed in 
\cite[Section~5]{BricenoRoldanYuchaoChen} and related to compute 
the fusion estimator in fused LASSO problems 
\cite{Friedman2007,Ohishi2021,Ohishi2022,Tibshirani2005}.

\begin{pro}\label{prob:numericproblem}
	Let $\eta^0=(\eta_i^0)_{1\le i\le N} \in \R^N $, let 
	$\eta^1=(\eta_i^1)_{1\le i\le N} \in \R^N$, let 
	$(\alpha_1,\alpha_2) \in \RPP^2$, let
	$M\in 
	\R^{K\times N}$, let $z \in \R^K$. Consider the following 
	optimization problem.
	\begin{equation}
		\label{eq:numericproblem}
		\min_{ \eta^0 \leq x\leq \eta^1}  
		\left(\frac{\alpha_1}{2}\|Mx-z\|^2+\alpha_2
		\|\nabla x\|_1\right),
	\end{equation}
	where $\nabla: \R^N \to \R^{N-1} : (\xi_i)_{1\leq i\leq N} 
	\mapsto 
	(\xi_{i+1}-\xi_i)_{1\leq i\leq N-1}$ is the discrete gradient.
\end{pro}
By 
\cite[Equation (5.2) \& Example 4.5]{BricenoRoldanYuchaoChen} 
Problem~\ref{prob:numericproblem} can be written equivalently 
as the optimization problem on Example~\ref{ex:1}, where $\sT\colon \sx = (\mathpzc{x},\mathpzc{w})\mapsto
	\mathpzc{A}\mathpzc{x}-\mathpzc{w}$, $
	\sV=\ker\sT$, $\nabla 
\mathsf{h}= 
\alpha_1(\id-z)$ is 
$\alpha_1-$Lipschitzian, $\mathsf{L}=\nabla$, and 
$\|\mathsf{L}\|=2$. Therefore, 
Problem~\ref{prob:numericproblem} can be solved by the methods 
in \eqref{eq:algoMTOP1} and \eqref{eq:algoSDROP1}, which reduces to 
Algorithm~\ref{algo:MTPD} and Algorithm~\ref{algo:CMTPD}. 
We 
recall that 
\cite[Example~24.22]{bauschkebook2017}
\begin{equation}
	\prox_{\alpha \|\cdot\|_1} \colon x \mapsto 
	\big(\text{sign}(x_i)\max\{|x_i|-\alpha,0\}\big)_{1\leq i 
		\leq n}
\end{equation}
and for $C=\menge{x \in \R^n}{\eta^0 \leq
	x\leq \eta^1}$ we have 
\begin{equation}
	\prox_{\iota_C} = P_C \colon x \mapsto 
	\max\{\eta_0,\min\{x,\eta_1\}\},
\end{equation}
where $\max$ and $\min$ are applied componentwise. Additionally, 
we 
use the identity in \eqref{e:Moreau_nonsme}.
\begin{algo} \label{algo:MTPD}
	Let $N=(\id+M^*M)^{-1}$, let 
	$\gamma = 0.999\cdot 2/(8+\alpha_1)$, let 	
	$x_0=w^1_{-1}=0_{\R^N}$, let 
	$z_0=0_{\R^K}$, and let
	$u_0=w^2_{-1}=0_{\R^{N-1}}$. Consider the 
	recurrence.
	\begin{equation}
		(\forall n \in \N)\quad \left\lfloor
		\begin{aligned}
			&w_n^{1} = \nabla^* u_n\\
			&w_n^{2}= -\nabla x_n\\
			&q_n^{1} =2 
			w^{1}_n - 
			w^{1}_{n-1}-M^*N(M(2
			w^{1}_n - 
			w^{1}_{n-1})+z_{n}-b)\\
			&q_n^{2} = z_n - 
			b+N(M(2 
			w^{1}_n - 
			w^{1}_{n-1})+z_{n}-b)\\
			&p^1_{n}=\max\{\eta_0,\min\{x_{n}+\gamma
			y^1_{n}-\gamma q_n^{1},\eta_1\}\}\\
			&p^2_{n}=z_{n}+\gamma 
			y^2_{n}-\gamma q_n^{2}\\
			&\widehat{u}_n = u_{n}/\gamma 
			-2 
			w_n^{2}
			+ w_{n-1}^{2}\\\
			&u_{n+1}=\gamma(\widehat{u}_n
			-\textnormal{sign}(\widehat{u}_n 
			)\cdot\max\{|\widehat{u}_n 
			|-\alpha/\gamma,0\})\\
			&x_{n+1}=N(p_n^1+M^*p_n^2)\\
			&z_{n+1}=M^*x_{n+1}\\
			&y^1_{n+1}=y^1_{n}-\frac{p^1_n-x_{n+1}}{\gamma}\\
			&y^2_{n+1}=y^2_{n}-\frac{p^2_n-z_{n+1}}{\gamma}.
		\end{aligned}
		\right.
	\end{equation}
\end{algo}
\begin{algo} \label{algo:CMTPD}
	Let 
	$\gamma = 0.999 \hat{\gamma}$ where $\hat{\gamma}$ satisfies
	$(2/3-(4+\alpha_1)\hat{\gamma}-16\alpha_1\hat{\gamma}^3) 
	=	0$, let 	
	$x_0=w^1_{-1}=0_{\R^N}$, let 
	$z_0=0_{\R^K}$, and let
	$u_0=w^2_{-1}=0_{\R^{N-1}}$. Consider the 
	recurrence.
	\begin{equation}
		(\forall n \in \N)\quad \left\lfloor
		\begin{aligned}
			&w_n^{1} = \nabla^* u_n\\
			&w_n^{2}= -\nabla x_n\\
			&q_n^{1} =2 
			w^{1}_n - 
			w^{1}_{n-1}-M^*N(Mw^{1}_n
			+z_{n}-b)\\
			&q_n^{2} = z_n - 
			b+N(Mw^{1}_n
			+z_{n}-b)\\
			&p^1_{n}=\max\{\eta_0,\min\{x_{n}+\gamma
			y^1_{n}-\gamma q_n^{1},\eta_1\}\}-\gamma
			(w^{1}_n - 
			w^{1}_{n-1})\\
			&p^2_{n}=z_{n}+\gamma 
			y^2_{n}-\gamma q_n^{2}-\gamma( 
			w^{1}_n - 
			w^{1}_{n-1})\\
			&\widehat{u}_n = u_{n}/\gamma 
			-
			w_n^{2}\\\
			&u_{n+1}=\gamma(\widehat{u}_n
			-\textnormal{sign}(\widehat{u}_n 
			)\cdot\max\{|\widehat{u}_n 
			|-\alpha/\gamma,0\})-\gamma(w_n^{2}
			-z w_{n-1}^{2})\\
			&x_{n+1}=N(p_n^1+M^*p_n^2)\\
			&z_{n+1}=M^*x_{n+1}\\
			&y^1_{n+1}=y^1_{n}-\frac{p^1_n-x_{n+1}}{\gamma}\\
			&y^2_{n+1}=y^2_{n}-\frac{p^2_n-z_{n+1}}{\gamma}.
		\end{aligned}
		\right.
	\end{equation}
\end{algo}
Note that, the 
function
$x \mapsto \frac{\alpha_1}{2}\|Mx-z\|^2$ is convex differentiable 
with gradient $\alpha_1 M^*(M\cdot-z)$, which gradient is  
$\alpha_1\|M\|^2$-Lipschitz. Therefore, 
Problem~\ref{prob:numericproblem} can 
be solved 
by the primal-dual algorithm proposed in 
\cite{Condat13,Vu13}. As we mentioned above, if $\|M\|$ is large, 
it 
is necessary to 
choose small step sizes for ensuring the convergence of the 
primal-dual algorithm (see 
\cite[Equation~4.14]{BricenoRoldanYuchaoChen}). Similarly, by 
using 
product space 
techniques, FHRB can be used to solve this problem by selecting 
step 
sizes from the interval 
$]0,2/(4\|L\|+\|M\|^2)[$. Consequently, if $\|M\|$ 
is large, it is necessary to choose a small value for 
$\gamma$. On this setting, FHRB reduces to the following 
algorithm.
\begin{algo}[FHRB]\label{algo:FHRB}
	Let $\gamma \in]0, 2/(8+\alpha_1\|M\|^2)[$, let 	
	$x_0=w^1_{-1}=0_{R^N}$, and let
	$u_0=w^2_{-1}=0_{R^{N-1}}$.
	\begin{equation}
		(\forall n \in \N)\quad \left\lfloor
		\begin{aligned}
			&w_n^{1} = L^* u_n\\
			&w_n^{2}= -L x_n\\
			&q_n = 2 w^{1}_n - 
			w^{1}_{n-1}+M^*(Mx_{n}-b)\\
			&x_{n+1}=\max\{\eta_0,\min\{q_n,\eta_1\}\}\\
			&\widehat{u}_n  = u_{n}/\gamma 
			-2 
			w_n^{2}
			+ w_{n-1}^{2}\\\
			&u_{n+1}=\gamma(\widehat{u}_n 
			-\textnormal{sign}(\widehat{u}_n 
			)\cdot\max\{|\widehat{u}_n 
			|-\alpha/\gamma,0\}).
		\end{aligned}
		\right.
	\end{equation}
\end{algo}
Routines of Condat-V\~u and FPIHF for solving 
Problem~\ref{prob:numericproblem} are available 
in \cite{BricenoRoldanYuchaoChen} in Algorithm~1 and Algorithm~2, 
respectively. 

The following numerical experiments were implemented in MATLAB 
2023A 
and run in a laptop with Windows 11, 2.1GHz, 8-core, AMD Ryzen 5 
3550H, 32 GB ram.
\subsection{Random Matrices}
In order to compare Condat-V\~u, FPIHF, FHRB, 
Algorithm~\ref{algo:MTPD}, and 
Algorithm~\ref{algo:CMTPD} we set, similarly to 
\cite{BricenoRoldanYuchaoChen},
$\alpha_1=5$ and 
$\alpha_2=0.5$ and we consider
$M=\kappa \cdot \text{rand}(N,K)$, 
$\eta^0=-1.5\cdot\text{rand}(N)$, 	
$\eta^1=1.5\cdot\text{rand}(N)$, 
$z=\text{randn}(N)$, and $\kappa \in \{1/10,1/20,1/30\}$
where $\text{rand}(\cdot,\cdot)$ and $\text{randn}(\cdot,\cdot)$ 
are 
functions in MATLAB generating matrices uniformly 
distributed. We consider $N \in\{400,800,1200\}$ 
and, 
for each $N$, we test three values of $K$ described in 
Tables~\ref{table:I}-\ref{table:III}. For each
instance
we generate
$20$ random realizations for $M$, 
$z$, $\eta^0$, and $\eta^1$.
As the authors note in \cite{BricenoRoldanYuchaoChen}, the 
average value of
$\|M\|$ is bigger when $\kappa$ is smaller, 
therefore, for 
smaller values of $\kappa$ more restrictive are the 
step-sizes for 
Condat-V\~u and FHRB. The step sizes described in 
Algorithm~\ref{algo:MTPD} and Algorithm~\ref{algo:CMTPD} are 
chosen 
as large as possible while ensuring their respective convergence. 
To 
find $\hat{\gamma}$ in Algorithm~\ref{algo:CMTPD}, we used the 
function solve from MATLAB. For Condat-V\~u and FHRB, we used the 
same step sizes as stated in \cite{BricenoRoldanYuchaoChen}.  As 
a 
stop criterion for our experiments, 
we 
consider the relative error with tolerance $10^{-6}$. 
Additionally, we include a limit of $50000$ iterations. If some 
method exceeds that limit, we write $\infty$ on the table. 
Tables~\ref{table:I}-\ref{table:III} shows the results of the 
experiments, exhibiting the average number of iterations and CPU 
time. 

From Tables~\ref{table:I}-\ref{table:III} we can note that, in 
all cases, Algorithm~\ref{algo:MTPD} outperforms FHRB in terms of 
both the average number of iterations and CPU time. This 
highlights the advantages of splitting the linear operator, which 
allows for larger step-sizes. Additionally, when $N=800$ or 
$N=1200$, 
FHRB reach 
the 
iteration limit while Algorithm~\ref{algo:MTPD} converges before 
reaching that limit. In all cases, for different 
values of 
$N$ and $K$, the step-sizes of Algorithm~\ref{algo:MTPD}  fall 
within the range of $]0, 2/(8+\alpha_1)[=]0, 0.15[$. On the other 
hand, the 
step-sizes of FHRB depends on $\|M\|$ and fall within 
the range of $]0, 
2/(8+\|M\|^2)[$. For instance, when $N=1200$, $K=1000$, 
and $\kappa =10$, the average value of $\|M\|$ is
$\overline{\|M\|}=54.77$, which yields 
$2/(8+\overline{\|M\|}^2) \approx 1.33 \cdot 10^{-4}$. 
This highlights the significant difference in admissible 
step-sizes between the two methods in this particular case.

Tables~\ref{table:I}-\ref{table:III} shows that 
Algorithm~\ref{algo:MTPD} and Algorithm~\ref{algo:CMTPD} 
outperform FPIHF in terms of the
average number of iterations and average CPU time. Furthermore, 
according 
to 
Table~\ref{table:IV}, the average numbers of 
iterations per
average time of iterations of Algorithm~\ref{algo:MTPD} and  
Algorithm~\ref{algo:CMTPD} are always 
higher than FPIHP. Consequently, each iteration of 
Algorithm~\ref{algo:MTPD} is 
less CPU time-consuming compared to FPIHP, providing empirical 
support for what was mentioned in 
Remark~\ref{rem:1}.\ref{rem:11}.

Comparing with Condat-V\~u, 
Tables~\ref{table:I}-\ref{table:III} indicates that 
Condat-V\~u needs considerably more iterations than 
Algorithm~\ref{algo:MTPD} and Algorithm~\ref{algo:CMTPD} to reach 
the 
stopping criterion. 
However, each iteration of Algorithm~\ref{algo:MTPD} and 
Algorithm~\ref{algo:MTPD} require more 
CPU time due to the additional calculations involved in the 
projections onto the subspace. From Tables~\ref{table:I} we 
conclude that, when $\kappa =1/10$, Algorithm~\ref{algo:CMTPD} 
has 
better performance than  Condat-V\~u. However, from 
Tables~\ref{table:II}-\ref{table:III}, we observe that 
Condat-V\~u 
outperforms Algorithm~\ref{algo:CMTPD} in several cases when 
$\kappa=1/20$ and when $\kappa=1/30$. On the other hand,
From Table~\ref{table:I}-\ref{table:III} we deduce
that Algorithm~\ref{algo:MTPD} outperforms Condat-V\~u in all 
cases 
excluding when
$\kappa=1/30$, $N=1200$, and $K=800$. In this particular case,  
Algorithm~\ref{algo:MTPD} is 
more competitive than FPIHF.
Note that Algorithm~\ref{algo:MTPD} outperforms
Condat-V\~u even when Condat-V\~u overcome FPIHF. 

From Table~\ref{table:IV} we deduce that 
Algorithm~\ref{algo:MTPD} 
and Algorithm~\ref{algo:CMTPD} have comparable CPU time per 
iteration.
However, from Tables~\ref{table:I}-\ref{table:III} we conclude 
that  
Algorithm~\ref{algo:MTPD} 
outperform Algorithm~\ref{algo:CMTPD} in average number of 
iterations 
and CPU time. This difference is attributable to the step-sizes 
of 
each algorithm. For  
Algorithm~\ref{algo:MTPD} the step-sizes is $\gamma \approx 
0.1538$, 
meanwhile, the step-sizes for Algorithm~\ref{algo:CMTPD} is 
$\gamma 
\approx 0.0732$. Note that, if we use the step-sizes provided by
\cite[Theorem~3.1]{Jingjing20213OPSDR} on 
Algorithm~\ref{algo:CMTPD}, 
they would be limited by $2/(18+3\cdot 
\alpha_1)=0.\overline{06}$.
{\small
	\begin{table}
		\caption{Comparison of Condat-V\~u, FPIHF, FHRB,  
			Algorithm~\ref{algo:MTPD}, and  
			Algorithm~\ref{algo:CMTPD} for 
			$\kappa=1/10$.\label{table:I}}
		\begin{tabular}{cccccccccc} 
			& 	
			
			&  Av. time (s)& Av. iter & Av. 
			time 
			(s)& 
			Av. iter & 
			Av. time (s)& Av. iter\\ 
			\cmidrule[0.5pt](lr){3-8}
			$N$& Algorithm & \multicolumn{2}{c}{$ 
				K=200$}&\multicolumn{2}{c}{$K=250$} 
			&\multicolumn{2}{c}{ 
				$K=300$}\\ 	\cmidrule[0.5pt](lr){1-1} 
			\cmidrule[0.5pt](lr){2-2}
			\cmidrule[0.5pt](lr){3-4} 
			\cmidrule[0.5pt](lr){5-6}\cmidrule[0.5pt](lr){7-8}
			\multirow{5}{*}{$400$} &	
			Condat-V\~u & 1.30 & 10103  & 1.24  & 10027  & 
			1.17 & 
			9383  \\ 
			& FPIHF  & 0.72 &  1518 & 1.41 & 2725 & 2.19 & 
			3451	
			\\ 
			& FHRB & 
			4.85 & 40546 & 4.45 & 36109 & 3.98 & 35526
			\\	 
			& Algorithm~\ref{algo:MTPD} & 0.29 & 925 & 0.58 & 
			1679 & 0.68 & 1794 \\
			& Algorithm~\ref{algo:CMTPD} & 0.56 & 1984 & 0.98 
			& 
			2998 & 1.51 & 3741 \\ \cmidrule[0.5pt](lr){3-8}
			\multicolumn{2}{c}{} & \multicolumn{2}{c}{$ 
				K=400$}&\multicolumn{2}{c}{$K=500$} 
			&\multicolumn{2}{c}{ 
				$K=600$}\\  
			\cmidrule[0.5pt](lr){3-4} 
			\cmidrule[0.5pt](lr){5-6}\cmidrule[0.5pt](lr){7-8}
			\multirow{5}{*}{$800$} &	Condat-V\~u  &
			10.78 & 24159  & 11.55 & 23255 & 10.65 & 21214 & 
			\\ 
			& FPIHF & 3.19 & 2823 & 6.36 & 3145 & 6.22 & 
			2283 	
			\\ 
			& FHRB & 
			$\infty$  
			& $\infty$ & $\infty$ & $\infty$ & $\infty$  & 
			$\infty$ \\	 
			& Algorithm~\ref{algo:MTPD} & 1.91 & 2241 & 2.76 
			& 
			1999 & 3.40 & 1941 \\					& 
			Algorithm~\ref{algo:CMTPD} & 2.49 & 3234 & 4.75 
			& 
			3339 & 4.77 & 2669 \\ \cmidrule[0.5pt](lr){3-8}
			\multicolumn{2}{c}{} & \multicolumn{2}{c}{$ 
				K=600$}&\multicolumn{2}{c}{$K=800$} 
			&\multicolumn{2}{c}{ 
				$K=1000$}\\  
			\cmidrule[0.5pt](lr){3-4} 
			\cmidrule[0.5pt](lr){5-6}\cmidrule[0.5pt](lr){7-8}
			\multirow{5}{*}{$1200$} &	Condat-V\~u  
			& 40.97 & 39717 & 38.48 & 37319 & 36.65 & 33408 
			\\ 
			& FPIHF & 14.04 & 3757 & 15.09 & 2942 & 29.44 & 
			4420
			\\ 
			& FHRB & 
			$\infty$ & $\infty$ & $\infty$ & $\infty$& 
			$\infty$ & 
			$\infty$ \\	 
			& Algorithm~\ref{algo:MTPD} & 5.61 & 2232 & 8.75 
			& 
			2548 & 15.27 & 3472\\					& 
			Algorithm~\ref{algo:CMTPD} & 9.53 & 3904 & 10.81 
			& 
			3190 & 13.35 & 3003
		\end{tabular}
	\end{table}

	\begin{table}
		\caption{Comparison of Condat-V\~u, FPIHF, FHRB,  
			Algorithm~\ref{algo:MTPD}, and  
			Algorithm~\ref{algo:CMTPD} for 
			$\kappa=1/20$.\label{table:II}}
		\begin{tabular}{cccccccccc} 
			& 	
			
			&  Av. time (s)& Av. iter & Av. 
			time 
			(s)& 
			Av. iter & 
			Av. time (s)& Av. iter\\ 
			\cmidrule[0.5pt](lr){3-8}
			$N$& Algorithm & \multicolumn{2}{c}{$ 
				K=200$}&\multicolumn{2}{c}{$K=250$} 
			&\multicolumn{2}{c}{ 
				$K=300$}\\ 	\cmidrule[0.5pt](lr){1-1} 
			\cmidrule[0.5pt](lr){2-2}
			\cmidrule[0.5pt](lr){3-4} 
			\cmidrule[0.5pt](lr){5-6}\cmidrule[0.5pt](lr){7-8}
			\multirow{5}{*}{$400$} &Condat-V\~u  
			& 1.17  & 8646 & 0.80 & 7055 & 0.71 & 6963 \\ 
			& FPIHF  & 1.88 & 3812  & 1.00 & 2310 & 0.81 & 
			1450 
			\\ 
			& FHRB & 
			5.07 
			&  41571 & 3.88 & 33493 & 3.33 & 
			29695 	\\
			& Algorithm~\ref{algo:MTPD} & 0.67 & 2176 & 0.48 
			& 
			1373& 0.40 & 874 \\ 
			& Algorithm~\ref{algo:CMTPD} & 1.38 & 4645 & 0.84 
			& 
			3080 & 0.65 & 1975 \\\cmidrule[0.5pt](lr){3-8}
			\multicolumn{2}{c}{} & \multicolumn{2}{c}{$ 
				K=400$}&\multicolumn{2}{c}{$K=500$} 
			&\multicolumn{2}{c}{ 
				$K=600$}\\  
			\cmidrule[0.5pt](lr){3-4} 
			\cmidrule[0.5pt](lr){5-6}\cmidrule[0.5pt](lr){7-8}
			\multirow{5}{*}{$800$} &Condat-V\~u  
			& 6.75  & 14402 & 6.26 & 12745  & 6.32 & 12662 
			\\ 
			& FPIHF & 7.30 & 5258 & 10.23 & 4952 & 12.18 & 
			4596
			\\ 
			& FHRB & 
			$\infty$ &  $\infty$ &  $\infty$ & $\infty$ &  
			21.98 & 
			49057\\
			& Algorithm~\ref{algo:MTPD} & 2.36 & 2488 & 4.64 
			& 
			3241 & 3.41 & 2156\\
			& Algorithm~\ref{algo:CMTPD} & 4.22 & 4567 & 
			6.93
			& 
			4827 & 7.86 & 4547 \\			 
			\cmidrule[0.5pt](lr){3-8}
			\multicolumn{2}{c}{} & \multicolumn{2}{c}{$ 
				K=600$}&\multicolumn{2}{c}{$K=800$} 
			&\multicolumn{2}{c}{ 
				$K=1000$}\\  
			\cmidrule[0.5pt](lr){3-4} 
			\cmidrule[0.5pt](lr){5-6}\cmidrule[0.5pt](lr){7-8}
			\multirow{5}{*}{$1200$} &Condat-V\~u  
			& 23.07 & 20395  & 21.38 & 19557 &  20.73 & 18875 
			\\ 
			& FPIHF & 16.15 & 3883  &24.68 & 4560 & 27.88 & 
			4347 
			\\ 
			& FHRB & 
			$\infty$ & $\infty$  & $\infty$ & $\infty$ & 
			$\infty$ 
			& $\infty$	\\
			& Algorithm~\ref{algo:MTPD} & 8.89 & 3240 & 17.18 
			& 
			4698 & 12.55 & 2997	\\
			& Algorithm~\ref{algo:CMTPD} & 13.32 & 4684 & 
			18.65 
			& 5355 & 18.66 & 4415 		
		\end{tabular}
	\end{table}
	
	\begin{table}
		\caption{Comparison of Condat-V\~u, FPIHF, FHRB,  
			Algorithm~\ref{algo:MTPD}, and  
			Algorithm~\ref{algo:CMTPD} for 
			$\kappa=1/30$.\label{table:III}}
		\begin{tabular}{cccccccccc} 
			& 			
			&  Av. time (s)& Av. iter & Av. 
			time 
			(s)& 
			Av. iter & 
			Av. time (s)& Av. iter\\ 
			\cmidrule[0.5pt](lr){3-8}
			$N$& Algorithm & \multicolumn{2}{c}{$ 
				K=200$}&\multicolumn{2}{c}{$K=250$} 
			&\multicolumn{2}{c}{ 
				$K=300$}\\ 	\cmidrule[0.5pt](lr){1-1} 
			\cmidrule[0.5pt](lr){2-2}
			\cmidrule[0.5pt](lr){3-4} 
			\cmidrule[0.5pt](lr){5-6}\cmidrule[0.5pt](lr){7-8}
			\multirow{5}{*}{$400$} &	Condat-V\~u  
			& 1.02 & 8039 & 0.83 & 7662 & 0.99 & 10701  \\ 
			& FPIHF  & 1.79 & 3479  & 1.40 & 2747 & 2.05 & 
			4157\\ 
			& FHRB & 
			2.80 & 27355 & 3.31 & 28091 & 3.30 & 27674 \\ 
			& Algorithm~\ref{algo:MTPD} & 0.50 & 1716 & 0.74 
			& 
			1904 & 0.83 & 2313\\
			& Algorithm~\ref{algo:CMTPD} & 1.19 & 3810 & 1.17 
			& 
			4102 & 1.49 & 4931  				
			\\ \cmidrule[0.5pt](lr){3-8}
			\multicolumn{2}{c}{} & \multicolumn{2}{c}{$ 
				K=400$}&\multicolumn{2}{c}{$K=500$} 
			&\multicolumn{2}{c}{ 
				$K=600$}\\  
			\cmidrule[0.5pt](lr){3-4} 
			\cmidrule[0.5pt](lr){5-6}\cmidrule[0.5pt](lr){7-8}
			\multirow{5}{*}{$800$} &	Condat-V\~u  
			& 5.94 & 11858 & 5.55 & 11729  & 4.95  & 11438\\ 
			& FPIHF & 7.68 & 5244  &8.58  & 3992 & 9.41 & 
			4396\\ 
			& FHRB &  
			$\infty$ &  $\infty$ & $\infty$  &  $\infty$ & 
			21.93 
			& 49864 \\ 
			& Algorithm~\ref{algo:MTPD} & 4.09 & 4037 & 4.96 
			& 
			3562 & 4.80 & 3072 \\
			& Algorithm~\ref{algo:CMTPD} & 5.14 & 5544 & 7.30 
			& 
			5238 & 8.03 & 5213  				
			\\ 			
			\cmidrule[0.5pt](lr){3-8}
			\multicolumn{2}{c}{} & \multicolumn{2}{c}{$ 
				K=600$}&\multicolumn{2}{c}{$K=800$} 
			&\multicolumn{2}{c}{ 
				$K=1000$}\\  
			\cmidrule[0.5pt](lr){3-4} 
			\cmidrule[0.5pt](lr){5-6}\cmidrule[0.5pt](lr){7-8}
			\multirow{5}{*}{$1200$} &	Condat-V\~u  
			& 17.98 & 16519 & 17.66  &  15903 & 17.64 & 15796 
			\\ 
			& FPIHF & 15.74 & 4064 & 40.18 & 7880 & 25.21 & 
			3818\\ 
			& FHRB & 
			$\infty$ & $\infty$  &  $\infty$ & $\infty$  & 
			$\infty$ & $\infty$  \\ 
			& Algorithm~\ref{algo:MTPD} & 9.29 & 3628 & 23.38 
			& 
			6338  & 9.82 & 2060 \\ 
			& Algorithm~\ref{algo:CMTPD} & 12.79 & 5127 & 
			29.25 & 
			8470 & 15.49 & 3463 				
			\\ 
		\end{tabular}
	\end{table}
	
	\begin{table}
		\caption{Average number of iterations per
			average time of iterations for FPIHF, 
			Algorithm~\ref{algo:MTPD}, and 
			Algorithm~\ref{algo:CMTPD}. We consider all 
			cases, 
			independently of $\kappa$.\label{table:IV}}
		\begin{tabular}{ccccc} 
			& 			
			&  Av. iter/Av. time (s)  & Av. iter/Av. time (s) 
			& Av. iter/Av. time (s) \\ 
			\cmidrule[0.5pt](lr){3-5}
			$N$& Algorithm & $ 
			K=200$&$K=250$ 
			&$K=300$\\ 	\cmidrule[0.5pt](lr){1-1} 
			\cmidrule[0.5pt](lr){2-2}
			\cmidrule[0.5pt](lr){3-3} 
			\cmidrule[0.5pt](lr){4-4}\cmidrule[0.5pt](lr){5-5}
			\multirow{2}{*}{$400$} & FPIHF  & 2000 & 2039  & 
			1550 
			\\ 
			& Algorithm~\ref{algo:MTPD} & 3289 & 2747 & 
			2361	\\
			& Algorithm~\ref{algo:CMTPD} & 3331 & 3409 & 2482
			\\ \cmidrule[0.5pt](lr){3-5}
			\cmidrule[0.5pt](lr){3-5}
			&  & $ 
			K=400$&$K=500$ 
			&$K=600$\\ 	
			\cmidrule[0.5pt](lr){3-3} 
			\cmidrule[0.5pt](lr){4-4}\cmidrule[0.5pt](lr){5-5}
			\multirow{2}{*}{$800$} & FPIHF  & 705 & 480 & 400
			\\ 
			& Algorithm~\ref{algo:MTPD} & 1047 & 712 & 592\\
			& Algorithm~\ref{algo:CMTPD} & 1125 & 706 & 601	
			\\ \cmidrule[0.5pt](lr){3-5}				
			\cmidrule[0.5pt](lr){3-5}
			& & $ 
			K=600$&$K=800$ 
			&$K=1000$\\ 
			\cmidrule[0.5pt](lr){3-3} 
			\cmidrule[0.5pt](lr){4-4}\cmidrule[0.5pt](lr){5-5}
			\multirow{2}{*}{$1200$} & FPIHF  & 254 & 192  & 
			152
			\\ 
			& Algorithm~\ref{algo:MTPD} & 382 & 275 & 226\\
			& Algorithm~\ref{algo:CMTPD} & 384 & 289 & 229
		\end{tabular}
\end{table}}

\subsection{Computed Tomography}
In this section we compare the numerical performance of Condat-V\~u, FPIHF, FHRB, Algorithm~\ref{algo:MTPD}, and 
	Algorithm~\ref{algo:CMTPD} in the context of Computed Tomography \cite{Kak2001}, i.e., in Problem~\ref{prob:numericproblem}, we aim to recover and image $\overline{x}$ from its noisy tomographic projection $z$. In this context, the linear operator $M$ represent the discretized Radon projector and the vector $z$ is a tomographic projection including a multiplicative Poisson noise of mean $M\overline{x}$. As a test image $\overline{x}$, we consider the image of $64\times 64$ pixels in Figure~\ref{fig:original} generated by the function {\it phantom} from MATLAB. The projector $M$ is given by the line length ray-driven projector 
	\cite{zengRayDriven93} and implemented in MATLAB using the line 
	fan-beam projector provided by the ASTRA toolbox 
	\cite{vanAarleASTRA2016,vanAarleASTRA2015}. We test different scenarios for the projector $M \in \R^{K\times N}$ varying the numbers of projections and the size of the detector, thus, $N=4096$ and $K \in \{1600,2790, 3375, 4500, 3375, 4500, 6090, 7440\}$. In all cases, the projector $L$ describes a fan-beam geometry over 180$^o$, the source-to-object distance is $800$ mm, and the source-to-image distance is $1200$ mm. In this case, the discrete gradient $\nabla\colon x\mapsto \nabla x=(\nabla_1x,\nabla_2x)$ includes horizontal and vertical differences through linear operators $\nabla_1$ and $\nabla_2$, respectively, its adjoint $\nabla^*$ is the discrete divergence, and we have $\|L\|=\sqrt{8}$ \cite{TV-chambolle}. To test the algorithms we consider $\alpha_1=1 $ and $\alpha_2=0.01 $. The step sizes were taken as in previous section. As a stop criterion we consider a tolerance of $10^{-8}$ for the relative error and a maximum of $10^4$ iterations.
	The result are shown in Table~\ref{table:CT}. From this table we observe that Algorithm~\ref{algo:MTPD} and 
	Algorithm~\ref{algo:CMTPD} overcome Condat-V\~u, FPIHF, and FHRB in terms of CPU Time, Number of Iterations, Objective Value, and PSNR, for every $K \in \{2790, 3375, 4500, 3375, 4500, 6090, 7440\}$. Additionally, we observe that as long as $K$ increases, the predominance of Algorithm~\ref{algo:MTPD} and Algorithm~\ref{algo:CMTPD} over the other methods also increases. In the particular case when $K=1600$, FPIHF exhibits the best performance, however, Algorithm~\ref{algo:MTPD} and 
	Algorithm~\ref{algo:CMTPD} provides smaller Objective Value and higher PSNR. Note that Condat-V\~u and FHRB, in all scenarios, is stopped by the maximum number of iteration. The Noisy sinogram and the recovered images for $K=7400$ are shown in Figure~\ref{fig:sino} and Figure~\ref{fig:reco}, respectively.
	{\small
		\begin{table}
			\caption{Comparison of Condat-V\~u, FPIHF, FHRB,  
					Algorithm~\ref{algo:MTPD}, and  
					Algorithm~\ref{algo:CMTPD} in 
					computed tomography problems.\label{table:CT}}
			\begin{tabular}{ccccccccccc} 
				$N=4096$  &  Time& Iter & O.V. & error & PSNR & Time& Iter & O.V. & error & PSNR \\ 
				\cmidrule[0.5pt](lr){2-11}
				Algorithm & \multicolumn{5}{c}{$ 
					K=1600$}&\multicolumn{5}{c}{$K=2790$} \\
				\cmidrule[0.5pt](lr){1-1}
				\cmidrule[0.5pt](lr){2-6} 
				\cmidrule[0.5pt](lr){7-11}
				Condat-V\~u  
				& 127.9  & $10^{4}$ & 4.91 & $5e{-5}$ &  6.0 & 195.7 & $10^{4}$ & 5.89 & $4e{-5}$& 8.5 \\ 
				FPIHF  & 61.9 & 1407 & 3.93 & $9e{-9}$ & 9.7 & 
				36.2 & 874 & 4.38 & $9e{-9}$ & 20.8
				\\ 
				FHRB & 
				130.1 
				&  $10^{4}$ & 11.0 & $5e{-6}$ & 3.1 & 
				181.8 & $10^{4}$ & 8.82 & $1e{-5}$& 6.2	\\
				Algorithm~\ref{algo:MTPD} & 113.8 & 4296 & 3.83 
				& 
				$9e{-9}$& 11.38 & 31.3 & 1092 & 4.31 & $9e{-9}$ & 24.3 \\ 
				Algorithm~\ref{algo:CMTPD} & 122.9 & 4446 & 3.83 & $9e{-9}$ 
				& 
				11.38 & 30.3 & 1084 & 4.31 & $9e{-9}$& 24.3  \\\cmidrule[0.5pt](lr){2-11}
				& \multicolumn{5}{c}{$K=3375$}&\multicolumn{5}{c}{$K=4500$} \\
				\cmidrule[0.5pt](lr){2-6} 
				\cmidrule[0.5pt](lr){7-11}
				Condat-V\~u  
				& 222.7  & $10^4$ & 5.53 & $4e{-5}$  & 7.8 & 216.8 & $10^4$ & 6.08 &$2e{-5}$  & 8.3
				\\ 
				FPIHF & 33.3 & 845 & 4.59 & $9e{-9}$ & 18.2 & 
				28.2 & 712 & 5.04 & $9e{-9}$ & 22.4
				\\ 
				FHRB & 
				221.8 &  $10^4$ &  7.26 & $1e{-5}$ &  
				6.0 & 
				225.4 & $10^4$ & 7.39 & $1e{-5}$ & 6.2 \\
				Algorithm~\ref{algo:MTPD} & 26.1 & 944 & 4.53 
				& 
				$9e{-9}$ & 20.5 & 21.2 & 779 & 4.98 
				& 
				$9e{-9}$ & 24.2 \\
				Algorithm~\ref{algo:CMTPD} & 27.6 & 980 & 4.53 
				& 
				$9e{-9}$ & 20.5 & 21.5 & 785 & 4.98 & $9e{-9}$ & 24.2	\\\cmidrule[0.5pt](lr){2-11}
				& \multicolumn{5}{c}{$K=6090$}&\multicolumn{5}{c}{$K=7440$} \\
				\cmidrule[0.5pt](lr){2-6} 
				\cmidrule[0.5pt](lr){7-11}
				Condat-V\~u  
				& 237.5  & $10^4$ & 6.94 & $2e{-5}$ & 10.9 & 250.2 & $10^4$ & 7.12 & $1e{-5}$ & 13.1
				\\ 
				FPIHF & 40.6 & 964 & 5.82 & $9e{-9}$ & 27.5 & 
				45.1 & 1014 & 6.42 & $9e{-9}$ & 30.6
				\\ 
				FHRB & 
				243.1 &  $10^4$ &  7.62 & $1e{-5}$ &  
				9.7 & 
				292.2 & $10^4$ & $7.29$ & $1e{-5}$ & 12.1 \\
				Algorithm~\ref{algo:MTPD} & 18.0 & 658 & 5.78
				& 
				$9e{-9}$ & 29.0 & 20.6 & 665 & 6.41 & $9e{-9}$ & 32.0 \\
				Algorithm~\ref{algo:CMTPD} & 16.5 & 661 & 5.79
				& 
				$9e{-9}$ & 29.0 & 20.6 & 676 & 6.41 & $9e{-9}$ & 32.0	
			\end{tabular}
		\end{table}
	}

	\begin{figure}
		\centering
		\subfloat[ Original Image ($64\times 64 $ pixels)]{\label{fig:original}\includegraphics[scale=0.28]{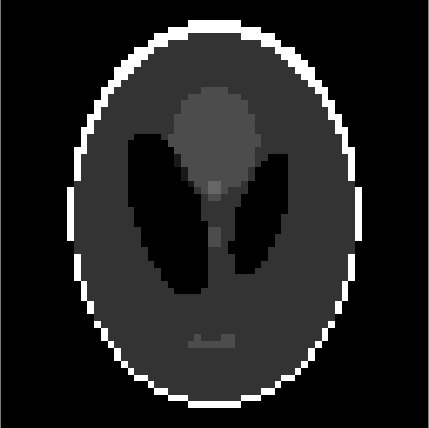}}\,
		\subfloat[Noisy Sinogram ($K=7400$)
		]{\label{fig:sino}\includegraphics[scale=0.25]{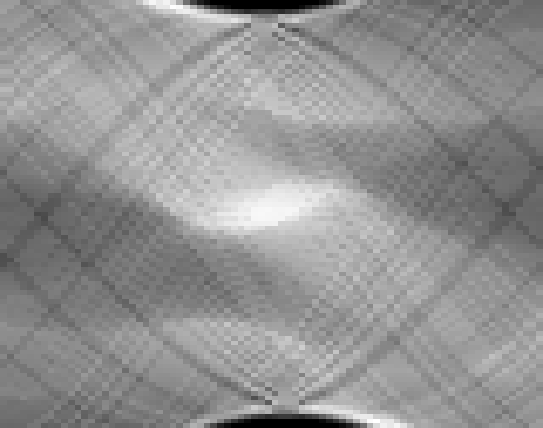}}
		\caption{ }\label{fig:original_sino}
	\end{figure}
	
	\begin{figure}
		\centering
		\subfloat[ Condat-Vu (PSNR= 13.1)]{\label{fig:CV}\includegraphics[scale=0.28]{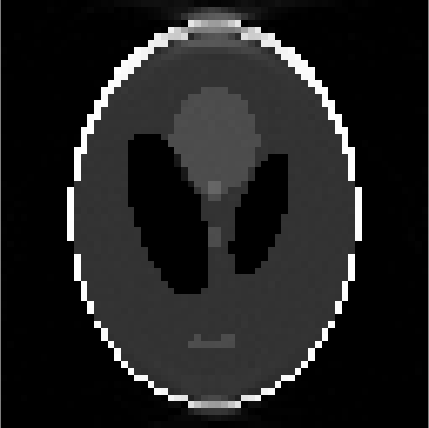}}\,
		\subfloat[FPIHF (PSNR = 30.6)
		]{\label{fig:BAD}\includegraphics[scale=0.28]{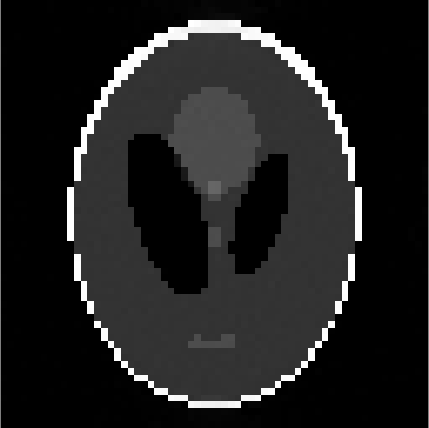}}\,
		\subfloat[FHRB (PSNR = 12.1) 
		]{\label{fig:MT}\includegraphics[scale=0.28]{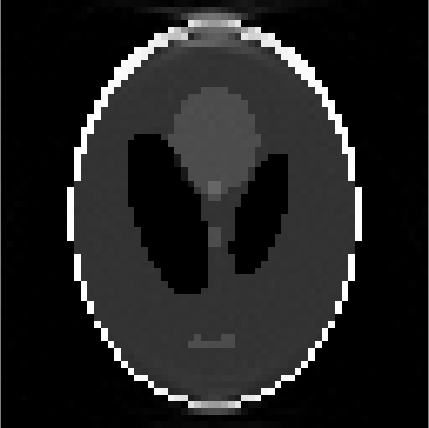}}\\
		\subfloat[Algorithm~\ref{algo:MTPD} (PSNR=32.0)
		]{\label{fig:MTPI}\includegraphics[scale=0.28]{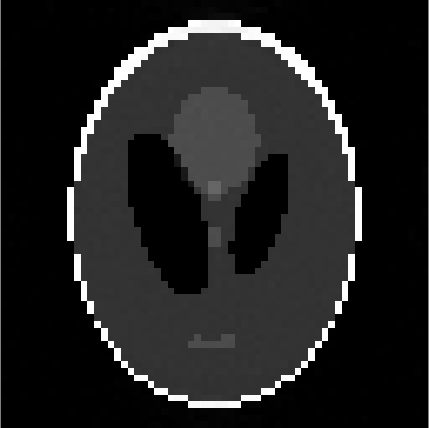}}\,
		\subfloat[Algorithm~\ref{algo:CMTPD} (PSNR=32.0)]{\label{fig:SDRPI}\includegraphics[scale=0.28]{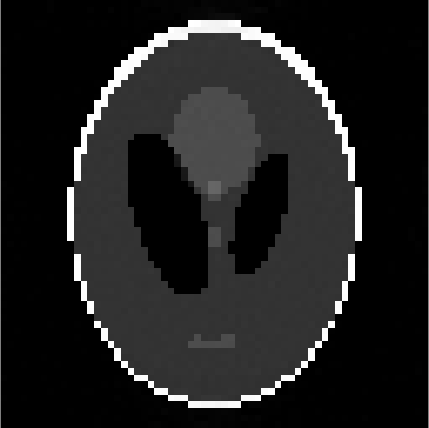}}
		\caption{Recovered images for $K=7400$.}\label{fig:reco}
	\end{figure}

\section{Conclusions}
In this article, we proposed two methods for solving monotone 
inclusion involving a maximally monotone operator, a 
cocoercive operator, a monotone-Lipschitz operator, and a normal 
cone 
to a vector subspace. Our methods offer several advantages in 
computational implementations compared to the method proposed in 
\cite{BricenoRoldanYuchaoChen} for solving the same problem. 
Specifically, our methods require only one 
activation of the Lipschitzian operator and two projections onto 
the 
vector subspace per iteration. We further applied our methods to 
systems of linear composite primal-dual problems, inclusions involving a sum of maximally monotone operator, and optimization problems, 
leveraging the properties of our main methods. The experimental 
results demonstrate the numerical superiority of our 
method over existing methods in the literature. Overall, our 
proposed method provides an efficient and effective solution for 
monotone inclusion problems, offering computational advantages 
and exhibiting promising performance in applications.

\end{document}